\theoremstyle{plain}
\newtheorem{theorem}{Theorem}
\newtheorem{lemma}[theorem]{Lemma}
\newtheorem{corollary}[theorem]{Corollary}
\newtheorem{condition}[theorem]{Condition}
\newtheorem{claim}[theorem]{Claim}
\theoremstyle{definition}
\newtheorem{definition}[theorem]{Definition}
\newtheorem{conjecture}[theorem]{Conjecture}
\theoremstyle{remark}
\newcommand{\R}{\mathbb{R}}
\newcommand{\ben}{\begin{enumerate}}
\newcommand{\een}{\end{enumerate}}
\newcommand{\bit}{\begin{itemize}}
\newcommand{\eit}{\end{itemize}}
\def\bal#1\eal{\begin{align*}#1\end{align*}}
\DeclareMathSymbol{\mlq}{\mathord}{operators}{``}
\DeclareMathSymbol{\mrq}{\mathord}{operators}{`'}
\def\XXint#1#2#3{{\setbox0=\hbox{$#1{#2#3}{\int}$ }
\vcenter{\hbox{$#2#3$ }}\kern-.6\wd0}}
\title{Bounds on Growth and Impossibility of Collapse for Point Vortex Systems}
\author{Samuel Zbarsky}
\begin{document}
\maketitle
\begin{abstract}
We consider 2D point vortex systems and, under certain conditions on the masses of the point vortices, prove that collapse is impossible and provide bounds on the growth of the system. The bounds are typically of the form $O(t^a)$ for some $a<1/2$, but we obtain various results for various assumptions on the masses.
\end{abstract}

\section{Introduction}
We discuss point vortex systems in $\R^2$. This is a model for 2-dimensional incompressible fluids where vorticity is concentrated around certain points. The model is defined by having $n$ point vortices with nonzero signed masses $m_1,\ldots,m_n$ and trajectories $x_i(t)$ which satisfy the ODE
\[
\frac{d}{dt} x_i(t)=\sum_{j\ne i} m_j K(x_i,x_j).
\]
where
\[
K(x,y)=\frac{(x-y)^\perp}{|x-y|^2}.
\]
Here $(z_1,z_2)^\perp=(-z_2,z_1)$ and we assume that at $t=0$, no two point vortices are at the same point.

We are interested in two related questions:
\begin{enumerate}
\item Under what conditions on the masses can we say that the ODE has global existence?
\item For given masses, what bounds can we give on solutions to the ODE?
\end{enumerate}
We begin with a discussion of previous work on the behavior of point vortex systems. A more thorough introduction can be obtained by reading \cite{MarchioroPulvurenti94pointchapter} and \cite{Aref07}. First, we note that there are two other ways to think of point vortex systems. One is to identify $\R^2$ with the complex plane, and use
\begin{equation}\label{complexkernel}
K(z_1,z_2)=\frac{i}{\ \overline{z_1-z_2}\ }
\end{equation}
to rewrite the ODE. Another approach is to note that a point vortex system is a Hamiltonian system with the position and momentum variables being the $x$ and $y$ coordinates of vortices (suitably rescaled) and the Hamiltonian being
\[
H=\sum_{1\le i<j\le n} m_im_j\log|x_i-x_j|.
\]
Both of these points of view are useful for proving certain results, but we will not adopt them in the present paper.

The ODE has the following conserved quantities, which are easy to check by direct computation:
\begin{enumerate}
\item $X=\sum m_i x_i$. This is actually two conserved quantities (the $x$ and $y$ coordinates). When the total mass $\sum m_i$ is nonzero, we can divide $X$ by it to get conservation of center of mass.
\item $I=\sum m_i |x_i|^2$ is the second moment (physically, it corresponds to the fluid's angular momentum).
\item $\mathcal E=\sum_{1\le i<j\le n} m_im_j\log|x_i-x_j|$ is the energy.
\item $\tilde I=\sum_{1\le i<j\le n} m_im_j|x_i-x_j|^2$, obtained from an appropriate linear combination of $I$ and $|X|^2$. 
\end{enumerate}

It is easy to see that one-vortex systems are stationary and that two-vortex systems have circular or linear orbits. For three vortices the system is completely integrable (recall the Hamiltonian formulation) and its trajectory has been analyzed in \cite{Aref79}. For four vortices, the special case where the sum of masses and $X$ are both 0 also gives an integrable system \cite{Eckhardt88}.

Also using the Hamiltonian formulation, \cite{Khanin82} showed using KAM theory that for generic collections of masses, there is a positive measure set of initial data that leads to quasiperiodic motion, constructed hierarchically with nearby pairs of vortices in high-frequency orbits, such pairs of vortices orbiting each other at a much larger distance with much larger period, and so on.

There have also been a large number of results obtaining relative equilibria, that is configurations which are stationary, rotate, or translate uniformly, and analyzing their behavior (see \cite{VortexCrystals03} and \cite{Newton14} for surveys of results in this direction). The only result we'll state along these lines is that three vortices at the vertices of an equilateral triangle, whatever their masses, give a relative equilibrium \cite{Novikov75}. There are also self-similarly expanding or contracting configurations, which we discuss in Section~\ref{sec:examples}.

Finally, there is the question of rigorously justifying the point vortex model, that is obtaining solutions to the ODE as limits of smooth solutions to the 2D Euler equation by replacing each point vortex with a localized smooth nonpositive or nonnegative vorticity function. This justification is obtained for different assumptions on the vorticity function in~\cite{MarchioroPulvurenti93}, \cite{Marchioro98}, \cite{Serfati98}, \cite{OtherSerfati98} (in order of weakening assumptions and stronger conclusions).

We now turn to a discussion of when we can tell based on masses of point vortices that we have global existence and boundedness of solutions. For one, two, or three vortices, this can be seen when Conditions~\ref{cond:sumnonzero} and \ref{cond:nospiral} below are satisfied by using complete integrability mentioned above. For more than 3 vortices, we note that by standard ODE theory, a point vortex system can fail to have global existence only if
\[
\liminf_{s\to T^-} \min_{1\le i<j\le k} |x_i(s)-x_j(s)|=0.
\]
Following an argument found in \cite{MarchioroPulvurenti94pointchapter}, we note that when all the masses have the same sign, conservation of $I$ gives us an upper bound on the solution while it exists, and then conservation of $\mathcal E$ gives us a lower bound on the distance between vortices for all time, both easily computed from the initial data. We aim to obtain results in this vein when the masses can have different signs.

Outside of Section~\ref{sec:examples}, we typically assume that the $n$ point vortices satisfy the following conditions:
\begin{condition}[No Translation Condition]\label{cond:sumnonzero}
No nonempty subset has total mass 0.
\end{condition}
and
\begin{condition}[No Spiral Condition]\label{cond:nospiral}
Take any partition
\[
S=S_0\sqcup S_1\sqcup\cdots\sqcup S_k
\]
of the indices into sets where $k\ge 2$ and all the sets with the possible exception of $S_0$ are nonempty. If we let
\[
\Omega_i=\sum_{j\in S_i} m_j
\]
then we have
\[
\sum_{1\le i<j\le k} \Omega_i\Omega_j\ne 0.
\]
\end{condition}
In Section~\ref{sec:examples}, we explain why we impose these conditions and give some examples of bad behavior that can occur if they are not satisfied.

The only previous results in these directions that we are aware of for more than three vortices of varying sign and which don't depend on particularities of the initial configuration come from  \cite{MarchioroPulvurenti94pointchapter} and are as follows:
\begin{enumerate}
\item Assume the No Translation Condition (Condition~\ref{cond:sumnonzero}) is satisfied. Then over a fixed time interval, there is a bound (depending on the masses) for how much the position of any vortex can change.
\item Assume the No Translation Condition is satisfied. Then for Lebesgue-almost all initial data, there is global existence. For a fixed time interval, they also quantify the measure of initial data which will lead to two vortices coming within distance $\epsilon$ of each other.
\end{enumerate}
We prove the following results, stated here informally. Note that if we reverse signs of masses and reflect the system over the $x$ axis, we will get identical evolution of the system, so all statements below have corresponding statements with signs reversed.
\begin{theorem}
\begin{enumerate}
\item  Assume the No Translation Condition (Condition~\ref{cond:sumnonzero}) is satisfied. Then the position of vortices can change by at most $C\sqrt{t}$ (see Theorem~\ref{thm:movementbound} for the precise statement).
\item Assume the No Translation Condition is satisfied. Then the only way the ODE can break down is by several vortices colliding at a point (see the discussion at the end of Section~\ref{sec:movementbound}).
\item Assume that all the masses are positive, except one that is negative and bigger in absolute value than all the others combined. Then the solution has global existence and is bounded (see Theorem~\ref{thm:onebignegative} for the precise statement).
\item For certain other open sets of masses, we have global existence and growth slower than $\sqrt{t}$. See Theorem~\ref{thm:nocollisionsandslowgrowth} and Section~\ref{sec:comments} for a discussion of what open sets we obtain this for and what bounds we obtain.
\end{enumerate}
\end{theorem}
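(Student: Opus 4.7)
The plan is to handle the four parts of the theorem in order, using the conserved quantities in increasingly refined ways and exploiting the two mass conditions heavily. Heuristically, the $O(\sqrt t)$ rate in part (1) is natural: since $|\dot x_i| \lesssim \sum_{j\ne i} |m_j|/|x_i - x_j|$, a vortex at typical separation $R$ from the others moves with speed of order $1/R$, so the scaling $R^2 \sim t$ is self-consistent. To make this rigorous I would build a time-dependent hierarchical cluster decomposition: at each instant group vortices so that intra-cluster diameters are much smaller than inter-cluster distances, iterating over several scales. The No Translation Condition guarantees that every cluster at every level has nonzero total mass, so from outside such a cluster behaves approximately like a single vortex of that combined mass. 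Then the center of mass of each cluster moves slowly, at a rate controlled by inter-cluster separations, while intra-cluster relative motion is fast but stays spatially confined. Summing contributions across the hierarchy, together with conservation of $I$ and $\tilde I$, should deliver the $O(\sqrt t)$ bound.

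Part (2) then follows directly: the standard blow-up criterion for the ODE says that breakdown at finite time $T$ requires $\liminf_{t\to T^-}\min_{i\ne j}|x_i(t) - x_j(t)| = 0$, and since part (1) bounds positions on every finite interval, escape to infinity is impossible, so breakdown must manifest as collision of some subset of vortices at a common point. For part (3), with $m_1 < 0$ dominant and the remaining $m_i > 0$, I would normalize the conserved center of mass $X/\sum m_i$ to lie at the origin. Then $m_1 x_1 = -\sum_{j>1} m_j x_j$, and Cauchy-Schwarz yields $m_1^2|x_1|^2 \le P\sum_{j>1} m_j|x_j|^2$, where $P = \sum_{j>1} m_j < |m_1|$. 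Combining with conservation of $I = -|m_1||x_1|^2 + \sum_{j>1} m_j|x_j|^2$ gives a uniform bound on every position. To rule out collisions I would invoke conservation of $\mathcal E$: if a subset collapses through a sequence of spatial scales, the leading logarithmic contribution at each scale has coefficient $\sum_{1\le i<j\le k}\Omega_i\Omega_j$ for the induced partition, and the No Spiral Condition forces this to be nonzero, contradicting boundedness of $\mathcal E$.

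Part (4) should follow from the same philosophy: any mass configuration for which a Cauchy-Schwarz-type identity combined with conservation of $I$ controls individual positions will yield boundedness or slow growth, and the non-collision argument from part (3) carries over. The ``slower than $\sqrt t$'' rates should come from iterating such a bound inside a cluster hierarchy, so that the large cluster is forced to grow slowly by one conservation-law argument while smaller-scale motion is already bounded by the bigger cluster's control.

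I expect the main obstacle to lie in part (1), specifically in making the time-dependent hierarchical cluster decomposition robust. Clusters must be allowed to merge, split, and reshuffle as the configuration evolves, and tracking accumulated displacement through these rearrangements while invoking the No Translation Condition correctly at every level is the delicate technical step. The role of the No Spiral Condition in part (1) is likely to prevent a logarithmically slow spiral-type motion at some scale that would inflate the bound past $\sqrt t$, and cleanly extracting that obstruction from the relevant conservation law is what will set the scheme on solid footing.
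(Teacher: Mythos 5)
Your outline for parts (1)--(2) has the right flavor, but note two things. First, the paper does not need (and does not use) the No Spiral Condition or the conserved quantities $I,\tilde I$ for the $\sqrt{t}$ movement bound; only conservation of the center of mass of each subset is used. Second, the paper sidesteps exactly the step you flag as delicate --- tracking a merging/splitting cluster hierarchy forward in time --- by running the argument by contradiction: Lemma~\ref{lem:movementbound} inducts on subsets $S$, and if some vortex of $S$ strays farther than $C_S\sqrt{T}$ from its initial position while the center of mass of $S$ moves by only $O(\sqrt{T})$ (which follows from the isolation hypothesis and the nonvanishing of $m_S$), then at a \emph{single} time $S$ must split into two well-separated pieces $A\sqcup B$; a bootstrap keeps them separated, and the inductive bounds $C_A, C_B$ give a contradiction. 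No dynamic reclustering is needed.

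The genuine gap is in your exclusion of collisions in part (3), and it propagates into part (4). You propose that if a subset collapses, the induced partition at each scale contributes $\bigl(\sum_{i<j}\Omega_i\Omega_j\bigr)\log(\text{scale})$ to $\mathcal E$, and the No Spiral Condition makes this diverge. This only works if all relevant distances shrink at comparable rates (the self-similar case). In a multi-scale collapse the logarithmic divergences at different scales carry different nonzero coefficients of different signs and can cancel exactly: the paper exhibits precisely such a scenario at the end of Section~\ref{sec:comments}, with inner distances $\sim(T-t)^{3/2}$ and outer distances $\sim(T-t)$ and masses tuned so that $\mathcal E$ stays bounded. The paper's Theorem~\ref{thm:onebignegative} therefore does not use the energy at all to forbid collisions; it uses the localized second moment $\tilde I_S=\sum_{i<j\in S}m_im_j|x_i-x_j|^2$, shows via the Taylor bounds on $DK$ that $|\frac{d}{dt}\tilde I_S|\le CR^{-2}|\tilde I_S|$ near a putative collision, and uses the ``one big negative'' sign structure to show $\tilde I_S$ is sign-definite and comparable to $\sum|x_i-x_j|^2$, so it cannot reach $0$ in finite time. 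Your part (4) sketch likewise misses the actual mechanism: Theorem~\ref{thm:nocollisionsandslowgrowth} is not a Cauchy--Schwarz/second-moment argument but a multi-scale induction on the number of subclusters, built on the almost-conservation of the cluster interaction energy (Lemma~\ref{lem:Ederiv}), the shifted logarithmic distances $\overline d$, and the energy-weighted average $\hat d$, with Condition~\ref{simplecondition} entering only to control that weighted average in Claim~\ref{claim}. As written, your argument would not close for either part.
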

Based on the methods used in the proof, one might hope that the following more general result holds:
\begin{conjecture}\label{conj:weak}
Suppose that the masses satisfy the No Translation and No Spiral Conditions. Then any solution has global existence and grows as $o(\sqrt{t})$.
\end{conjecture}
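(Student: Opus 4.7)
The proof should proceed by contradiction for both assertions, using the $O(\sqrt{t})$ movement bound from Theorem~\ref{thm:movementbound} as the starting point together with a hierarchical cluster analysis. At any given time, I would decompose the vortex set into nested clusters according to separation scales: two vortices belong to the same cluster at scale $\epsilon$ if they are at mutual distance much smaller than $\epsilon$ while being much further from all other vortices. The two Conditions play distinct roles in the argument: Condition~\ref{cond:sumnonzero} guarantees that every cluster in every such partition has nonzero total mass $\Omega$, so its center of vorticity is well-defined and, at leading order, behaves like an effective point vortex of mass $\Omega$; Condition~\ref{cond:nospiral} prevents the degenerate case where the monopole interaction between effective clusters, governed by $\sum_{i<j} \Omega_i\Omega_j \log d_{ij}$, vanishes to leading order, which would otherwise allow slow logarithmic drifts (spirals).

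For global existence, suppose a first collapse occurs at time $T<\infty$. By the discussion at the end of Section~\ref{sec:movementbound}, some cluster $C$ of vortices collapses to a single point; take $C$ minimal. All vortices outside $C$ remain a positive distance away, so their effect on the internal dynamics of $C$ is a Lipschitz perturbation on the vanishing scale of $C$. Consequently, the internal energy $\mathcal E_C = \sum_{i<j,\, i,j\in C} m_im_j\log|x_i-x_j|$ and internal second moment $\tilde I_C = \sum_{i<j,\, i,j\in C} m_im_j|x_i-x_j|^2$ are approximately conserved up to controlled error as $t\to T^-$. Partitioning $C$ into sub-clusters $S_1,\ldots,S_k$ according to the relative contraction rates of inter-vortex distances yields $k\ge 2$ sub-clusters, and matching the leading asymptotics of $\mathcal E_C$ and $\tilde I_C$ against the approximate conservation produces an algebraic condition on the $\Omega_i=\sum_{j\in S_i}m_j$ that is precisely $\sum_{i<j}\Omega_i\Omega_j=0$, contradicting No Spiral. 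One then inducts on $|C|$ to handle the internal hierarchy within each $S_i$.

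For $o(\sqrt{t})$ growth, I would use the scale invariance $x\mapsto \lambda x$, $t\mapsto \lambda^2 t$ of the ODE. Assume for contradiction a sequence $t_n\to\infty$ with $\max_{i,j}|x_i(t_n)-x_j(t_n)|\ge c\sqrt{t_n}$, rescale by $\lambda_n=\sqrt{t_n}$, and extract a limit along a subsequence. In the limit, vortices collapse into a finite number of clusters (those closer than $\ll \sqrt{t_n}$) at bounded mutual separations. The coarse-grained limit system is a point vortex system on effective masses $\Omega_i$, which by No Translation has all nonzero masses, and by No Spiral must, via a variant of the collapse argument above applied in reverse, fail to sustain order-one growth on the rescaled time scale. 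This produces the required contradiction at the level of "not $O(\sqrt{t})$"; a refinement analogous to Theorem~\ref{thm:nocollisionsandslowgrowth} would then sharpen this to $o(\sqrt{t})$.

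The main obstacle is coupling the fast internal dynamics of contracting (or non-expanding) clusters, which occur on timescale proportional to the squared cluster radius, with the slow external dynamics, and doing this uniformly across a potentially deep hierarchy of scales. One needs a multi-scale averaging argument in which, level by level, the effect of smaller sub-clusters on larger ones is replaced by a point vortex at its center of vorticity, with error terms small enough to be absorbed by the nondegeneracy provided by No Translation and No Spiral. The Conditions are exactly tuned to rule out the marginal resonances that would permit logarithmically slow drifts, so quantifying the implicit error terms at each scale — and showing they compose well across an unbounded number of scale separations as $t\to T^-$ or $t\to\infty$ — is where the real work lies.
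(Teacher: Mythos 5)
This statement is a \emph{conjecture} in the paper: no proof is given, and in fact the final paragraph of Section~\ref{sec:comments} describes a concrete obstruction to exactly the strategy you propose. So the question is whether your sketch closes the gap the author could not, and it does not. The critical step in your existence argument is: ``matching the leading asymptotics of $\mathcal E_C$ \ldots against the approximate conservation produces an algebraic condition on the $\Omega_i$ that is precisely $\sum_{i<j}\Omega_i\Omega_j=0$, contradicting No Spiral.'' That deduction is only valid if all inter-subcluster distances inside the collapsing set $C$ shrink at \emph{comparable} rates, so that the energy asymptotics read $\bigl(\sum_{i<j}\Omega_i\Omega_j\bigr)\log(\text{scale})+O(1)$. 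In a hierarchical collapse where different levels contract at different polynomial rates --- say top-level separations $\sim (T-t)$ and an inner subcluster's separations $\sim (T-t)^{3/2}$, which is exactly the paper's $n=10$ example --- the divergent part of the energy is a \emph{rate-weighted} combination $\sum \gamma_{ij}\,\Omega_i\Omega_j\log(T-t)$ over the several scales. The No Spiral Condition forbids each unweighted partial sum from vanishing, but it does not forbid the weighted combination from vanishing for a suitable choice of exponents and masses; the author notes that masses can be chosen to conserve total energy in such a scenario, and that Lemma~\ref{lem:Ederiv} gives only an $O(1/(T-t))$ bound on the internal energy drift, which is consistent with blowup. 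Your appeal to ``inducting on $|C|$'' does not resolve this, because the obstruction is precisely a cross-scale cancellation that no single level of the induction sees.

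Two further points. First, your use of $\tilde I_C$ as an approximately conserved quantity whose vanishing yields a contradiction does not work in general: $\tilde I_C$ is not sign-definite when the masses have mixed signs, so $\tilde I_C\to 0$ is not incompatible with a nondegenerate configuration; this is why Theorem~\ref{thm:onebignegative} needs the ``one big negative'' hypothesis to make $\tilde I_S$ sign-definite. Second, for the $o(\sqrt t)$ claim, the rescaling-and-compactness step is not justified: the rescaled configurations may degenerate (subclusters collapsing in the limit), the limit dynamics need not be a point vortex system on the effective masses, and the assertion that such a limit system ``must fail to sustain order-one growth'' is exactly what one would need to prove --- the paper only obtains sub-$\sqrt{T}$ growth under the much stronger Condition~\ref{simplecondition}, not under the No Translation and No Spiral Conditions alone. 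The statement remains open, and your outline reproduces the known strategy together with its known failure mode rather than a proof.
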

Probably a somewhat stronger, more narrowly tailored to avoiding specific bad behaviors, version of this conjecture would also hold.
\subsection{Bounds on the kernel and notation}
Later on, we will need the following bounds on the kernel $K$:
\begin{equation}\label{Taylorbounds}
\begin{aligned}
|DK(x,y)|&\le \frac{C}{|x-y|^2}\\
|D^2K(x,y)|&\le \frac{C}{|x-y|^3}\\
|D^3K(x,y)|&\le \frac{C}{|x-y|^4}
\end{aligned}
\end{equation}

We will let $C=C(m_1,\ldots,m_n)$ be a big enough positive constant, possibly denoting a different constant on different lines. When we use $O$ notation, the implicit constant will be allowed to depend on all the masses.

We will use $[n]$ to denote $\{1,\ldots,n\}$.

\subsection{Acknowledgements}
The author wishes to express his gratitude to Theodore Drivas for discussions of related problems and for encouraging him to finish writing up the present work. This work was supported by the National Science Foundation Graduate Research Fellowship Program under Grant No. DGE-1656466.

\section{Examples}\label{sec:examples}
If the No Translation Condition (Condition~\ref{cond:sumnonzero}) is not satisfied, the simplest example of bad behavior is a pair of vortices with $m_1=-m_2$ translating at constant speed, in which case the solution grows linearly. We can also have some other vortices in the system, as long as the pair of vortices  with masses $\pm m$ do not come too close to other vortices (for which we need a suitably favorable starting configuration). Then they can separate out, with distance from them to any other vortices growing linearly, and it is not hard to check that in the limit, they will translate with speed $C+O(1/t)$. In addition, one can expect, though we do not in general know how to prove except in the cases of relative equilibria, similar behavior for systems with more than two vortices for which total vorticity is 0, but $X\ne 0$. However, if the No Translation Condition is satisfied, then conservation of the center of mass should rule out such behaviors. This is why we need to include the No Translation Condition.

If the No Spiral Condition is not satisfied, more specifically if we have
\[
\sum_{1\le i<j\le n} m_im_j=0,
\]
then there are numerous constructed examples of solutions which self-similarly spiral outwards, growing as $\sqrt{t}$. An analysis of such self-similarly evolving 3-vortex system can be found in \cite{Aref07}. Some self-similarly evolving 4 and 5 vortex systems are constructed and analyzed in \cite{NovikovSedov79}. Some numerics for self-similarly evolving systems with more vortices may be found in \cite{Kudela14}. By reflecting these solutions, or by reversing the signs of the vorticities, we can construct examples that self-similarly spiral inwards and collapse in finite time. However, if
\[
\sum_{1\le i<j\le n} m_im_j\ne 0,
\]
then such self-similarly spiraling solutions are impossible, as they would violate conservation of the energy $\mathcal E$.

It is also possible to add some vortices far away (and slightly perturb initial positions) and still obtain a collapse~\cite{GrottoPappalettera22}. Similarly, if we have a system spiraling outwards with distances between vortices growing like $\sqrt{t}$, it seems plausible that we can replace each vortex with a cluster of vortices having the same total mass, and still get growth of the system as $\sqrt{t}$; in fact, we will construct such solutions in Corollary~\ref{corr:spiral}. It is in order to preclude these possibilities that we introduce the No Spiral Condition. We note that to break these mechanisms, we can use slightly weaker conditions. 

\section{Movement bound} \label{sec:movementbound}
We have the following statement, which essentially says that  each vortex of a set can move by at most $C\sqrt{T}$ over time $T$, assuming they stay far enough away from other vortices. The proof is identical to a proof in \cite{MarchioroPulvurenti94pointchapter}, but with dependence on the time interval quantified. It is interesting as a result in its own right, but it is also a model for the proof of Theorem~\ref{thm:nocollisionsandslowgrowth}.
\begin{lemma}\label{lem:movementbound}
Suppose we have a finite set of masses $m_1,\ldots,m_n$ satisfying the No Translation Condition (Condition~\ref{cond:sumnonzero}). Then for any subset $S\subseteq [n]$, there exists a constant $C_S$ depending on all the masses so that for any $T$, if we have a solution of the ODE on time interval $[t_1,t_1+T)$ with
\begin{equation}\label{cond:isolated}
|x_i(s)-x_j(s)|>\sqrt{T}
\end{equation}
for all $i\in S,j\notin S$, $s\in [t_1,t_1+T]$ then for any $s\in [t_1,t_1+T]$ and $i\in S$, we have that $|x_i(s)-x_i(t_1)|<C_S\sqrt{T}$.
\end{lemma}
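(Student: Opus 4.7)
My plan is to proceed by strong induction on $|S|$. The base case $|S|=1$ is immediate: with $S=\{i\}$, every other vortex lies at distance greater than $\sqrt T$ from $x_i$ throughout $[t_1,t_1+T]$, so $|\dot x_i|\le \sum_{j\ne i}|m_j|/\sqrt T\le C/\sqrt T$, and integration yields $|x_i(s)-x_i(t_1)|\le C\sqrt T$.

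The engine of the inductive step is that the weighted moment $X_S=\sum_{i\in S}m_ix_i$ feels only external interactions: by antisymmetry of $K$, every pair $(i,j)\in S\times S$ contributes $m_im_j\bigl(K(x_i,x_j)+K(x_j,x_i)\bigr)=0$ to $\dot X_S$, leaving
\[
\dot X_S=\sum_{i\in S,\,j\notin S}m_im_jK(x_i,x_j),
\]
which has magnitude at most $C/\sqrt T$ under the separation hypothesis. Integrating gives $|X_S(s)-X_S(t_1)|\le C\sqrt T$, and the No Translation Condition forces $M_S=\sum_{i\in S}m_i\ne 0$, so the centroid $\bar x_S=X_S/M_S$ moves by at most $C_S\sqrt T$.

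To upgrade centroid control to control of individual vortices, I would partition $[t_1,t_1+T]$ into sub-intervals on which the internal cluster structure of $S$ is stable, declaring two vortices to lie in the same cluster when they are within $\delta\sqrt T$ of each other for a small mass-dependent $\delta$. On a sub-interval where some cluster is a proper subset $S'\subsetneq S$, the inductive hypothesis applies to $S'$ with $T$ replaced by $T'=\delta^2 T$, since $S'$ is then separated from all other vortices (those in $S\setminus S'$ by at least $\delta\sqrt T=\sqrt{T'}$, those in $[n]\setminus S$ by at least $\sqrt T\ge\sqrt{T'}$): the induction yields displacement $\le C_{S'}\sqrt{T'}=C_{S'}\delta\sqrt T$ per sub-interval of length $T'$, and telescoping across the $\le T/T'=1/\delta^2$ such pieces within the cluster-stable region gives cumulative displacement $O(\sqrt T/\delta)=O(\sqrt T)$. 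On sub-intervals where all of $S$ is one cluster, every vortex is within $|S|\delta\sqrt T$ of $\bar x_S$, and together with the centroid bound this gives displacement $O(\sqrt T)$ again.

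The main obstacle is the bookkeeping at times when the cluster structure changes, because pairs can cross the threshold $\delta\sqrt T$ in principle many times; a naive count of transitions gives no a priori bound. One must choose $\delta$ small enough (but depending only on the masses and $|S|$) so that the inductive bounds and the combinatorics of transitions together yield a cumulative displacement of $O(\sqrt T)$, either by introducing a hysteresis band between ``merge'' and ``split'' thresholds or by a continuation / stopping-time argument on the maximal displacement seen so far. The No Translation Condition is used here also, since it ensures every cluster has nonzero mass sum, giving a well-defined sub-centroid constrained through $\bar x_S$ by the top-level centroid argument.
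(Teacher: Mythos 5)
Your base case and your treatment of the weighted moment $X_S=\sum_{i\in S}m_ix_i$ coincide with the paper's, but the step that upgrades centroid control to control of individual vortices has a genuine gap --- one you identify yourself and do not close. Your plan partitions $[t_1,t_1+T]$ into sub-intervals of stable cluster structure and telescopes the inductive bounds across them, but the number $N$ of such sub-intervals has no a priori bound, and the telescoping really needs one: applying the inductive hypothesis on pieces of lengths $T_1,\ldots,T_N$ yields per-piece displacements of order $\sqrt{T_m}$, and $\sum_m\sqrt{T_m}$ can be as large as $\sqrt{NT}$. The hysteresis fix is not obviously non-circular either, since bounding the number of threshold crossings requires bounding relative displacement, which is exactly the quantity being proved. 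As written, the argument does not establish the lemma.

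The paper's proof avoids dynamic cluster-tracking entirely by arguing by contradiction from a single splitting time. Suppose some $i\in S$ has $|x_i(s)-x_i(t_1)|\ge C_S\sqrt T$. Since the centroid $x_S$ moves by only $\hat C\sqrt T$, at time $s$ (or $t_1$) one has $|x_i-x_S|\ge\frac{C_S}{3}\sqrt T$, and because $\sum_{k\in S}m_k\ne 0$ this forces $|x_i-x_j|\ge c\,C_S\sqrt T$ for some $j\in S$. A pigeonhole over the $|S|-1$ gaps between vortices of $S$ then produces, at that one time, a partition $S=A\sqcup B$ with every cross-distance at least $\frac{c\,C_S}{3(|S|-1)}\sqrt T$. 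Choosing $C_S$ so that this coefficient exceeds $C_A+C_B+1$ for every partition, the inductive hypothesis applied to $A$ and $B$ --- together with a bootstrap/continuity argument that keeps $A$ and $B$ separated by $\sqrt T$ throughout $[t_1,t_1+T]$, so that each remains $\sqrt T$-isolated --- bounds the displacement of $i$ by $C_A\sqrt T<C_S\sqrt T$, a contradiction. The point you are missing is that one does not need to control all possible cluster dynamics: a single large excursion forces a single large internal separation, and the inductive bounds themselves then propagate that separation over the whole interval, so no transition counting is ever required.
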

\begin{proof}
We induct on the set $S$. If $|S|=1$, then at any point  in time \eqref{cond:isolated} tells us that the velocity is at most $\frac{C}{\sqrt T}$, so the total distance traveled is at most $C\sqrt{T}$.

Assume the statement holds for all proper subsets of $S$. We choose $C_S$ sufficiently large, depending on the masses and on $C_A$ for all $A\subseteq S$, to make the following argument go through.

First, we note that the center of mass of $S$, denoted $x_S$, does not move from internal interactions of $S$, so it only moves due to the external field coming from vortices not in $S$. Due to \eqref{cond:isolated}, this external velocity field applies velocity at most $\frac{\tilde C}{\sqrt T}$ to each vortex in $S$, so it applies velocity at most $\frac{\hat C}{\sqrt T}$ to the center of mass $x_S$, so $x_S$ moves by at most $\hat C\sqrt{T}$. Now, suppose for the sake of contradiction that for some $i\in S$ and some time $s$, we have $|x_i(s)-x_i(t_1)|\ge C_S\sqrt{T}$. Then, by taking $C_S>3\hat C$, we know that either at time $s$ or at time $t_1$, we have
\[
|x_i-x_S|\ge \frac{C_S}{3}\sqrt{T}.
\]
Without loss of generality, this occurs at time $s$ (otherwise we simply use $s$ to denote the time $t_1$). Because the total mass of $S$ is not 0, this implies that at time $s$, we have that for some $j\in S$,
\[
|x_i-x_j|\ge \frac{\sum_{k\in S} m_k}{\sum_{k\in S} |m_k|}\frac{C_S}{3}\sqrt{T}.
\]
We now can split $S$ into two clusters that are sufficiently far apart. In the worst case, the vortices of $S$ are evenly spaced with $x_i$ and $x_j$ at the two endpoints. Whatever the arrangement, there must be some partition
\[
S=A\sqcup B
\]
with $A$, $B$ nonempty and
\begin{equation}\label{Ssplit}
|x_k(s)-x_\ell(s)|>\frac{\sum_{k\in S} m_k}{\sum_{k\in S} |m_k|}\frac{C_S}{3(|S|-1)}\sqrt{T}
\end{equation}
for all $k\in A,\ell\in B$. We now just need that the coefficient in front of $\sqrt{T}$ in \eqref{Ssplit} is greater than $C_A+C_B+1$ for all possible partitions $S=A\sqcup B$, which is achievable by taking $C_S$ sufficiently large. We then obtain a contradiction and complete the proof by using the inductive hypothesis and making the bootstrap assumption that clusters $A$ and $B$ maintain a spatial separation of $\sqrt{T}$ for the entire time interval $[t_1,t_1+T]$ (all the times of which are at time interval at most $T$ forward or backward from $s$).
\end{proof}

We can now apply the lemma with $S=[n]$ to obtain
\begin{theorem}\label{thm:movementbound}
Suppose we have a finite set of masses $m_1,\ldots,m_n$ satisfying the No Translation Condition. Then there exists some constant $C=C(m_1,\ldots,m_n)$ so that  for any $T$, assuming that we have a solution of the ODE on time interval $[t_1,t_1+T)$, we have that $|x_i(s)-x_i(t_1)|<C\sqrt{T}$.
\end{theorem}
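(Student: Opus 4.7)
The plan is essentially to read Theorem~\ref{thm:movementbound} as the $S=[n]$ case of Lemma~\ref{lem:movementbound}. When $S=[n]$, the isolation hypothesis \eqref{cond:isolated} is a statement about indices $i \in S$ and $j \notin S$, but there are no indices outside of $[n]$, so the hypothesis holds vacuously. Consequently, the lemma directly yields $|x_i(s) - x_i(t_1)| < C_{[n]} \sqrt{T}$ for each $i \in [n]$ and each $s \in [t_1, t_1+T]$, which is precisely the conclusion of the theorem with $C = C_{[n]}$.

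The only thing I would double-check before writing this down is that the inductive argument in Lemma~\ref{lem:movementbound} really does produce a meaningful constant in the base-of-induction-vs-top-of-induction sense for the maximal set $S = [n]$. The lemma is proved by induction on $S$, and $S = [n]$ is the final step of that induction; nothing in its hypothesis requires $S \ne [n]$, and the No Translation Condition, which is the key ingredient invoked to split $S$ into two clusters whose total masses are both nonzero (so that the diameter of $S$ forces a separated pair), applies to $S = [n]$ just as it does to any proper subset. So no additional argument is needed.

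Thus the proof is a one-line reduction: apply Lemma~\ref{lem:movementbound} with $S = [n]$, observing that the external-isolation condition is vacuous. There is no real obstacle; the content of the theorem is entirely contained in the lemma, which is why the author presented it as an immediate corollary rather than re-proving anything. The only potential subtlety—whether the induction inside the lemma terminates appropriately and produces a finite constant $C_{[n]}$ depending only on the masses—is handled already inside the proof of the lemma.
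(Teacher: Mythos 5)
Your proposal is correct and matches the paper exactly: the paper derives Theorem~\ref{thm:movementbound} by applying Lemma~\ref{lem:movementbound} with $S=[n]$, for which the isolation hypothesis \eqref{cond:isolated} is vacuous. Your additional check that the induction in the lemma applies to the maximal set $S=[n]$ is sound and requires no further argument.
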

We make the observation that if we want to get a better constant $C$ in the theorem statement, we can abandon the induction structure of the proof, and instead solve an optimization problem with various constants attached to various clusters $S\subseteq [n]$. While doing this is not particularly worthwhile, we note it here because we will make an analogous  observation in a more interesting context after the proof of Theorem~\ref{thm:nocollisionsandslowgrowth}.

We can also use Theorem~\ref{thm:movementbound} to classify ways in which solutions can break down when the No Translation Condition is satisfied.

Suppose that the ODE has a solution on time $[0,T)$ that does not continue past $T$. It is easy to see from standard ODE existence and uniqueness that the only way this happens is if
\[
\liminf_{s\to T^-} \min_{1\le i<j\le k} |x_i(s)-x_j(s)|=0.
\]
From this it follows that there is at least one pair $(i,j)$ satisfying $1\le i<j\le k$ with
\[
\liminf_{s\to T^-} |x_i(s)-x_j(s)|=0.
\]
Applying Theorem~\ref{thm:movementbound} over intervals $[T-\epsilon,T)$, we obtain that
\[
\lim_{s\to T^-} |x_i(s)-x_j(s)|=0.
\]
If this holds, we say that we have a \emph{collision} of vortices $i$ and $j$. It is easy to see that collision is an equivalence relation, so at time $T$, we have one or several collisions of several vortices each, and the distance between a given collision and other vortices has some positive lower bound on $[T-\epsilon,T)$. Note that since we use Theorem~\ref{thm:movementbound}, we need the No Translation Condition for this argument. In fact, it may be possible to construct a blowup where a pair of vortices with mass $\pm 1$ moves around faster and faster, moving closer and thus getting faster each time it scatters off of another vortex, and has no limiting position.

\section{One big negative}\label{sec:negative}
\begin{theorem}\label{thm:onebignegative}
Suppose that $m_2,\ldots m_n>0$ and $m_1<0$ with $-m_1>m_2+\cdots+m_n$. Then the ODE solution exists and is bounded for all time.
\end{theorem}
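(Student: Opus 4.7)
The plan is to establish boundedness and impossibility of collisions separately: positions are pinned by conservation of the center of mass $X$ and the second moment $I$, while collisions are ruled out by conservation of the energy $\mathcal{E}$ combined with a sign calculation specific to this mass configuration.

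For boundedness, I would translate coordinates so that $X = 0$ throughout. Then $m_1 x_1 = -\sum_{j>1} m_j x_j$, and Cauchy--Schwarz gives $|m_1||x_1|^2 \leq \alpha \sum_{j>1} m_j |x_j|^2$ with $\alpha := (\sum_{j>1} m_j)/|m_1| < 1$ by hypothesis. Combined with the conserved quantity $I = -|m_1||x_1|^2 + \sum_{j>1} m_j |x_j|^2$, this yields $(1-\alpha)\sum_{j>1} m_j |x_j|^2 \leq I$, so each $|x_j|$ and $|x_1|$ is uniformly bounded in time.

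For no collisions, the discussion following Theorem~\ref{thm:movementbound} shows that any breakdown of the ODE must occur via a collision cluster $C \subseteq [n]$ with $|C|\geq 2$. The key sign calculation is that $\sum_{i<j \in C} m_i m_j \neq 0$ for every such $C$: if $1 \notin C$ the sum is strictly positive (all factors positive), while if $1 \in C$ with $\sigma := \sum_{j \in C\setminus\{1\}} m_j$ one computes
\[
\sum_{i<j\in C} m_i m_j = -|m_1|\sigma + \frac{\sigma^2 - \sum_{j\in C\setminus\{1\}} m_j^2}{2} \leq -\frac{|m_1|\sigma}{2} < 0,
\]
using $\sigma \leq \sum_{j>1} m_j < |m_1|$. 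I would then argue that the intra-cluster energy $\mathcal{E}_C := \sum_{i<j \in C} m_i m_j \log|x_i-x_j|$ must stay bounded: the internal dynamics of $C$ exactly conserves $\mathcal{E}_C$, and the perturbation from vortices outside $C$ (which remain at bounded distance from $C$) contributes only a bounded time derivative. But for a cluster whose pairwise distances are all comparable to some $r(t) \to 0$, a direct expansion gives $\mathcal{E}_C = (\sum_{i<j\in C} m_i m_j)\log r + O(1) \to \pm\infty$, contradicting boundedness of $\mathcal{E}_C$.

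The main obstacle is handling collision clusters with multi-scale substructure, i.e.\ where vortices within $C$ collapse at several different rates. The natural strategy is a hierarchical decomposition down to the finest sub-cluster on which pairwise distances are comparable, and applying the above energy argument there. The delicate step is that the external perturbation of this innermost sub-cluster's energy is driven partly by vortices at the next coarser scale, which is itself shrinking; one must check that these perturbations remain too small (in integrated effect over the collapse time) to cancel the $\log r$ singularity produced by the nonzero $\sum m_i m_j$ in the sub-cluster.
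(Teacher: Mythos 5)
Your boundedness argument is correct and is essentially the paper's: place the center of mass at the origin and use conservation of $I$, with the hypothesis $-m_1>m_2+\cdots+m_n$ making the relevant quadratic form definite. The gap is in the no-collision half, and it sits exactly where you flag it: the multi-scale collision cluster. Your sign computation $\sum_{i<j\in C}m_im_j\neq 0$ is correct, but $\mathcal E_C=\sum_{i<j\in C}m_im_j\log|x_i-x_j|$ only diverges like $\bigl(\sum_{i<j\in C}m_im_j\bigr)\log r$ when all pairwise distances in $C$ are comparable to a single scale $r$. If a sub-cluster $C'\subset C$ of positive vortices collapses at a faster rate than the rest of $C$, its large negative contribution $\sum_{i<j\in C'}m_im_j\log|x_i-x_j|$ can balance the positive contribution of the pairs involving vortex $1$, so boundedness of $\mathcal E_C$ yields no contradiction. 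Passing to $\mathcal E_{C'}$ in a hierarchical decomposition does not rescue this: the external forcing on the sub-cluster's energy now comes from vortices at the next, also shrinking, scale $b(t)$, and even the center-of-mass-corrected rate $Ca^2/b^4$ of Lemma~\ref{lem:Ederiv} integrates to a logarithmically divergent quantity for, e.g., $a\sim(T-t)^{3/2}$, $b\sim(T-t)$ --- the same order as the $\log$ singularity you are trying to exploit. The paper makes exactly this point in the closing remark of Section~\ref{sec:comments}: energy-rate bounds of the form $C/(T-t)$ are compatible with multi-scale collapse, so the energy method alone does not close this case, and your ``delicate step'' is a genuine unproved obstruction rather than a routine verification.

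The paper's proof of this theorem sidesteps the issue by replacing $\mathcal E$ with the quadratic quantity $\tilde I_S=\sum_{i<j\in S}m_im_j|x_i-x_j|^2$. The same sign structure you identified upgrades, for this quantity, to a pointwise coercivity estimate $|\tilde I_S|\ge c\sum_{i<j\in S}|x_i-x_j|^2$ valid for \emph{every} configuration, multi-scale or not: if $1\notin S$ all terms are positive, and if $1\in S$ the terms involving $m_1$ dominate the remaining ones by the factor $-m_1/\sum_{i\ge 2}m_i>1$, so $\tilde I_S$ is strictly negative and comparable to the squared diameter. Unlike the logarithm, $\sum_{i<j}|x_i-x_j|^2$ is comparable to its largest term, so no cancellation between scales is possible. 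Combined with the external-forcing bound $\bigl|\frac{d}{dt}\tilde I_S\bigr|\le \frac{C}{R^2}\sum_{i<j\in S}|x_i-x_j|^2\le\frac{C}{R^2}|\tilde I_S|$, Gr\"onwall's inequality shows $\tilde I_S$ cannot reach $0$ in finite time, ruling out any collision in a single step with no hierarchical decomposition. To repair your argument, replace $\mathcal E_C$ by $\tilde I_C$ and prove the coercivity estimate; your Cauchy--Schwarz computation from the boundedness part is essentially the inequality needed.
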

\begin{proof}
First, we prove existence. Suppose for the sake of contradiction that we have a collision at time $T$, and that $S\subset [n]$ is the set of vortices involved in that collision (since the No Translation Condition holds, this is the only way that breakdown can occur by the discussion at the end of Section~\ref{sec:movementbound}). We let $R>0$ and $\delta>0$ be chosen such that on the time interval $[T-\delta,T]$, the distance between any vortex in the collision and any vortex outside the collision is lower bounded by $R$ and the distance between any two of the colliding vortices is upper bounded by $R/100$. Then we have that
\[
\tilde I_S=\sum_{\substack{i,j\in S\\i<j}} m_im_j|x_i-x_j|^2
\]
is an almost-conserved quantity, in the sense that it doesn't change due to interactions of the vortices in the collision. Then we use the bound on $DK$ in \eqref{Taylorbounds} to get that the velocity difference between vortices $i,j\in S$ arising from the outside sources is at most
\[
C\frac{|x_i-x_j|}{R^2},
\]
so
\begin{equation}\label{ISODE}
\left|\frac{d}{dt} \tilde I_S\right|\le \sum_{\substack{i,j\in S\\i<j}} C\frac{|x_i-x_j|^2}{R^2}
\end{equation}

We now have two cases.

If $1\notin S$, then we immediately get first, that $\tilde I_S$ is positive shortly before time $T$, and second, that the right-hand side of $\eqref{ISODE}$ is bounded by $C\tilde I_S/R^2$, so
\[
\left|\frac{d}{dt} \tilde I_S\right|\le \frac{C}{R^2}\tilde I_S
\]
so $\tilde I_S$ cannot go to 0 as $t\to T^{-}$, which contradicts the vortices in $S$ colliding.

Now suppose $1\in S$. We will assume wlog that $S=[k]$ and we will translate our coordinates in a time-dependent way so that $x_1(t)=0$ (this does not change any of the values $\tilde I_S$, the right-hand side of \eqref{ISODE}, or the two quantities we are comparing below). Then we have that
\begin{align*}
\sum_{2\le i<j\le k}m_im_j |x_i-x_j|^2&=\left(\sum_{i=2}^k m_i\right)\sum_{i=2}^k m_i|x_i|^2-\sum_{i=2}^k\sum_{j=2}^k m_im_jx_i\cdot x_j\\
&=\left(\sum_{i=2}^k m_i\right)\sum_{i=2}^k m_i|x_i|^2-\left|\sum_{i=2}^k m_ix_i\right|^2\\
&\le \left(\sum_{i=2}^k m_i\right)\sum_{i=2}^k m_i|x_i|^2\\
&\le \frac{\sum_{i=2}^k m_i}{-m_1}\left(-m_1\sum_{i=2}^k m_i|x_i-x_1|^2\right).
\end{align*}
Thus, if we look at both the positive and the negative terms of $\tilde I_S$ (where the positive terms do not involve $m_1$ and the negative ones do), we see that the absolute value of the sum of the negative terms is bigger than the sum of the positive ones by a factor of at least
\[
1+\epsilon=\frac{-m_1}{\sum_{i=2}^n m_i}.
\]
From this we get that
\[
\sum_{1\le i<j\le n}^n |x_i-x_1|^2\le -C\tilde I_S.
\]
From this we get that $\tilde I_S$ is negative shortly before time $T$. Also, combining this with $\eqref{ISODE}$, we get that
\[
\left|\frac{d}{dt} \tilde I_S\right|\le \frac{-C}{R^2}\tilde I_S
\]
so $\tilde I_S$ cannot go to 0 as $t\to T^-$, which contradicts the vortices in $S$ colliding. This finishes the proof of global existence and uniqueness of solutions to the ODE.

We now prove boundedness, using conservation of the center of mass and second moment. Without loss of generality, we can assume that the center of mass of the entire system is at 0. Then, for any $t>0$, we have that the center of mass of vortices $2,\ldots,n$ is
\[
\frac{-m_1}{m_2+\cdots+m_n}x_1(t)
\]
so the second moment is
\begin{align*}
I&=\sum_{i=1}^n m_i|x_i(t)|^2\\
&\ge m_1|x_1(t)|^2+\left(\sum_{i=2}^n m_i\right)\left|\frac{-m_1}{m_2+\cdots+m_n}x_1(t)\right|^2\\
&\ge m_1|x_1(t)|^2+\frac{m_1^2}{m_2+\cdots+m_n}|x_1(t)|^2\\
&\ge c(m_1,\ldots,m_n)|x_1(t)|^2
\end{align*}
Thus $x_1$ is bounded. Since all the other masses are positive, the conservation of the second moment then gives us that the entire system is bounded.
\end{proof}

\section{Clusters}
For our proof of Theorem~\ref{thm:nocollisionsandslowgrowth}, we will need to define clusters and prove a lemma about them, which we do in this section.
\begin{definition}
Nonempty $S\subset  [n]$ is an $a$-cluster at time $t$ if $|x_i(t)-x_j(t)|\le a$ for all $i,j\in S$. An $a$-cluster is $b$-isolated if  $|x_i(t)-x_j(t)|\ge b$ for all $i\in S, j\notin S$.
\end{definition}

Some notes about this definition:
\begin{itemize}
\item For any $1\le i\le n$, we have that $\{i\}$ is a 0-cluster.
\item $[n]$ is $\infty$-isolated.
\item If we talk about a $b$-isolated $a$-cluster, we are implicitly assuming that $a\ll b$ (more precisely, there is some large constant $C$ depending on all the masses and we will always have $Ca\le b$). The constant $C$ is necessary since we will need to use the centers of mass of the clusters and, depending on the masses and their signs, we can have for instance that the center of mass of an $a$-cluster is at distance $100a$ from any of its vortices.
\end{itemize}

When we talk about the distance between two clusters $d(S_1,S_2)$, we will mean the distance between their centers of mass, but since we will only care about this distance approximately, and since the clusters will be sufficiently isolated, we could equally well take the closest distance of a vortex in one to a vortex in the other, the furthest distance of a vortex in one to a vortex in the other, or any other similar definition.

Now suppose we have a $b$-isolated $a$-cluster $S$. We can let
\[
m_S=\sum_{i\in S} m_i
\]
be the total mass of $S$ and
\[
x_S=\frac{\sum_{i\in S} m_ix_i}{m_S}
\]
be the center of mass of $S$ and consider the (negative) change in energy we would get if we replaced the cluster with a single vortex of the mass $m_S$ located at $x_S$ (the center of mass of $S$). Specifically, we recall that
\[
 \mathcal E=\sum_{1\le i<j\le n} m_im_j\log|x_i-x_j|
\]
is the energy. We let $\overline E_S$ be the energy of the vortex configuration where we replace the cluster $S$ with a single vortex of mass $m_S$  located at $x_S$, that is
\[
\overline E_S=\sum_{\substack{i,j\notin S\\i<j}}m_im_j\log|x_i-x_j|+\sum_{i\notin S}m_im_S\log|x_S-x_i|.
\]
We now define
\begin{equation}\label{eq:mathcalES}
\mathcal E_S=\mathcal E-\overline E_S=\sum_{i\notin S,j\in S}m_im_j\left(\log|x_i-x_j|-\log|x_S-x_i|\right)+\sum_{\substack{i,j\in S\\i<j}}m_im_j\log|x_i-x_j|.
\end{equation}
This is approximately equal to just the interaction energy within the cluster (the second sum on the right-hand-side), but we want this exact form of the energy, since it allows us to prove the following lemma:
\begin{lemma}\label{lem:Ederiv}
If $S$ is a $b$-isolated $a$-cluster, then
\[
\left|\frac{d}{dt} \mathcal E_S\right|\le C\frac{a^2}{b^4}
\]
\end{lemma}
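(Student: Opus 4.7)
My plan is to apply the chain rule to $\mathcal{E}_S$, use the Hamiltonian structure to eliminate most of the resulting terms, and bound the residual via Taylor expansion around $x_S$. First I would compute the partial derivatives of $\mathcal{E}_S$, carefully treating the center of mass $x_S$ as a function of $(x_j)_{j\in S}$ via $\partial x_S/\partial x_j=(m_j/m_S)I$. Introducing the auxiliary quantities
\[
V_k=\sum_{j\in S}m_j\left[\frac{x_k-x_j}{|x_k-x_j|^2}-\frac{x_k-x_S}{|x_k-x_S|^2}\right],\qquad W_S=\sum_{k\notin S}m_k\frac{x_S-x_k}{|x_S-x_k|^2},
\]
a direct calculation yields $\partial\mathcal{E}_S/\partial x_k=m_kV_k$ for $k\notin S$ and $\partial\mathcal{E}_S/\partial x_j=\nabla_{x_j}\mathcal{E}-m_jW_S$ for $j\in S$. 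Using $\dot x_j\cdot\nabla_{x_j}\mathcal{E}=0$ individually (from the Hamiltonian relation $\dot x_j=m_j^{-1}(\nabla_{x_j}\mathcal{E})^\perp$) together with $\sum_{j\in S}m_j\dot x_j=m_S\dot x_S$, the chain rule would collapse to
\[
\frac{d}{dt}\mathcal{E}_S=\sum_{k\notin S}m_kV_k\cdot\dot x_k-m_SW_S\cdot\dot x_S.
\]

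Next I would Taylor-expand the kernel around $x_S$: using $\sum_{j\in S}m_j(x_j-x_S)=0$ to kill the first-order term gives $|V_k|=O(a^2/b^3)$, and one further spatial derivative gives $|DV(x)|=O(a^2/b^4)$ for $x$ at distance of order $b$ from $x_S$. Applying the same device to $m_S\dot x_S=\sum_{i\in S,\,l\notin S}m_im_lK(x_i,x_l)$ would give $m_S\dot x_S=m_S\dot x_S^{\mathrm{pseudo}}+O(a^2/b^3)$, where the pseudo-velocity $\dot x_S^{\mathrm{pseudo}}:=\sum_{l\notin S}m_lK(x_S,x_l)$ equals exactly $W_S^\perp$. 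Since $W_S\cdot W_S^\perp=0$, the main piece of $m_SW_S\cdot\dot x_S$ vanishes by orthogonality, leaving $|m_SW_S\cdot\dot x_S|=O(a^2/b^4)$.

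The remaining sum $\sum_{k\notin S}m_kV_k\cdot\dot x_k$ is the hard part because nothing in the hypothesis prevents some external vortex $x_l$ from being very close to $x_k$, which would make $|\dot x_k|$ arbitrarily large. I would split $\dot x_k=\dot x_k^{\mathrm{int}}+\dot x_k^{\mathrm{ext}}$ with $\dot x_k^{\mathrm{int}}=\sum_{j\in S}m_jK(x_k,x_j)=O(1/b)$ contributing $O(a^2/b^4)$; for the external part I would exploit antisymmetry $K(x_k,x_l)=-K(x_l,x_k)$ to rewrite the ordered-pair sum as $\sum_{\{k,l\}\subset S^c}m_km_lK(x_k,x_l)\cdot(V_k-V_l)$. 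When $|x_k-x_l|\ge b/4$ the trivial bound $|K|\cdot(|V_k|+|V_l|)=O(a^2/b^4)$ applies; when $|x_k-x_l|<b/4$ the segment between them stays at distance of order $b$ from $S$, so $|V_k-V_l|\le|x_k-x_l|\sup|DV|=O(|x_k-x_l|\,a^2/b^4)$ exactly cancels the $1/|x_k-x_l|$ singularity of $K$. The main obstacle is precisely this external contribution: without the antisymmetrization the bound would appear to diverge, and it is the interplay between the antisymmetry of $K$ and the quadratic-in-$a$ derivative bound on $V$ that tames the singular kernel.
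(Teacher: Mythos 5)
Your proof is correct and follows essentially the same route as the paper's: after your chain-rule/Hamiltonian-orthogonality reduction, the residual $\sum_{k\notin S}m_kV_k\cdot\dot x_k-m_SW_S\cdot\dot x_S$ is (up to applying $\perp$) exactly the paper's expression \eqref{ddtoverlineES} with $u_i=V_i^\perp$, and your three key estimates --- $|V_k|=O(a^2/b^3)$ and $|m_S\dot x_S-m_SW_S^\perp|=O(a^2/b^3)$ from the linear Taylor term vanishing by the center-of-mass identity, and $|V_k-V_l|=O(a^2|x_k-x_l|/b^4)$ for close external pairs via antisymmetry of $K$ --- are precisely \eqref{uibound}, \eqref{uSbound}, and \eqref{uiujbound}. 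The only difference is bookkeeping: the paper cancels the leading terms by invoking energy conservation for the reduced system in which $S$ is replaced by a single vortex at $x_S$, whereas you cancel them via $\dot x_j\cdot\nabla_{x_j}\mathcal E=0$ together with $W_S\cdot W_S^\perp=0$, which is the same cancellation in different clothing.
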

\begin{proof}
The proof is by Taylor expansion of the kernel $K$ and obtaining cancellations. 
We recall that $\mathcal E_S=\mathcal E-\overline E_S$. Note that $\mathcal E$ is conserved, so we need only understand how $\overline E_S$ changes. For simplicity of notation, we renumber the vortices so that $S=[k]$. Then we let $v_i,v_S$ be the velocities that would arise from the vortex configuration where we replaced the cluster $S$ with a single vortex of mass $m_S$ located at $x_S$. Namely for $i>k$, we have
\begin{align*}
v_i&=m_S\frac{(x_i-x_S)^\perp}{|x_i-x_S|^2}+\sum_{\substack{k<j\le n\\j\ne i}}m_j \frac{(x_i-x_j)^\perp}{|x_i-x_j|^2}\\
v_S&=\sum_{k<j\le n}m_j \frac{(x_S-x_j)^\perp}{|x_S-x_j|^2}
\end{align*}
and let $u_i, u_j,u_S$ be the adjustments to the velocity so that 
\begin{align*}
\frac{d}{dt}x_i&=v_i+u_i\\
\frac{d}{dt}x_S&=v_S+u_S
\end{align*}
Then
\begin{align}
-\frac{d}{dt} \mathcal E_S=\frac{d}{dt} \overline E_S&=\sum_{k<i<j\le n} m_im_j\frac{\frac{d}{dt}(x_i-x_j)}{|x_i-x_j|^2}\cdot (x_i-x_j)+\sum_{k<i\le n} m_im_S\frac{\frac{d}{dt}(x_i-x_S)}{|x_i-x_S|^2}\cdot (x_i-x_S)\nonumber\\
&=\sum_{k<i<j\le n} m_im_j\frac{v_i-v_j}{|x_i-x_j|^2}\cdot (x_i-x_j)+\sum_{k<i\le n} m_im_S\frac{v_i-v_S}{|x_i-x_S|^2}\cdot (x_i-x_S)\nonumber\\
&\qquad+\sum_{k<i<j\le n} m_im_j\frac{u_i-u_j}{|x_i-x_j|^2}\cdot (x_i-x_j)+\sum_{k<i\le n} m_im_S\frac{u_i-u_S}{|x_i-x_S|^2}\cdot (x_i-x_S)\nonumber\\
&=\sum_{k<i<j\le n} m_im_j\frac{u_i-u_j}{|x_i-x_j|^2}\cdot (x_i-x_j)+\sum_{k<i\le n} m_im_S\frac{u_i-u_S}{|x_i-x_S|^2}\cdot (x_i-x_S)\label{ddtoverlineES}
\end{align}
where all the terms involving $v_i$ or $v_S$ cancel by the same calculation that gives conservation of energy for a point vortex system. To bound the right-hand side of \eqref{ddtoverlineES}, we start by bounding $u_i$ and $u_S$.
We note that by the same calculation which gives conservation of the center of mass of a point vortex system, the interactions within the cluster have no net effect on the derivative of $x_S$, so
\[
u_S=\frac{d}{dt}x_S-v_S=\sum_{j=1}^k\sum_{i=k+1}^n m_i\frac{m_j}{m_S}\left(\frac{(x_j-x_i)^\perp}{|x_j-x_i|^2}-\frac{(x_S-x_i)^\perp}{|x_S-x_i|^2}\right).
\]
We now Taylor expand $K(x,y)=\frac{(x-y)^\perp}{|x-y|^2}$ around $x=x_S$, see that the linear terms cancel by the definition of the center of mass, and use \eqref{Taylorbounds} to get that
\begin{equation}\label{uSbound}
|u_S|\le C\frac{a^2}{b^3}.
\end{equation}
We now bound $u_i$ for $i>k$ by calculating
\[
u_i=\frac{d}{dt}x_i-v_i=\sum_{j=1}^k m_j\left(\frac{(x_i-x_j)^\perp}{|x_i-x_j|^2}-\frac{(x_i-x_S)^\perp}{|x_i-x_S|^2}\right).
\]
We now Taylor expand $K(x,y)=\frac{(x-y)^\perp}{|x-y|^2}$ around $y=x_S$, see that the linear terms cancel by the definition of the center of mass, and use \eqref{Taylorbounds} to get that
\begin{equation}\label{uibound}
|u_i|\le C\frac{a^2}{b^3}.
\end{equation}
Together, \eqref{uibound} and \eqref{uSbound} give a bound of $C\frac{a^2}{b^4}$ for all terms on the right-hand side of \eqref{ddtoverlineES} except those where $|x_i-x_j|\le b/10$. For those terms, we get
\begin{align*}
u_i-u_j&= \frac{d}{dt}x_i-v_i-\left(\frac{d}{dt}x_j-v_j\right)\\
&=\sum_{\ell=1}^k m_\ell\left(\frac{(x_i-x_\ell)^\perp}{|x_i-x_\ell|^2}-\frac{(x_i-x_S)^\perp}{|x_i-x_S|^2}\right)-m_\ell\left(\frac{(x_j-x_\ell)^\perp}{|x_j-x_\ell|^2}-\frac{(x_j-x_S)^\perp}{|x_j-x_S|^2}\right).
\end{align*}
We now Taylor expand $K(x,y)=\frac{(x-y)^\perp}{|x-y|^2}$ around $x=x_i, y=x_S$, and see that the first term that doesn't cancel comes from $D_xD^2_y f$, which from \eqref{Taylorbounds} gives us that
\begin{equation}\label{uiujbound}
|u_i-u_j|\le C\frac{a^2|x_i-x_j|}{b^4}.
\end{equation}
We now plug \eqref{uSbound}, \eqref{uibound}, and \eqref{uiujbound} into \eqref{ddtoverlineES} to obtain the lemma statement.
\end{proof}

\section{No collisions and growth slower than $T^{1/2}$}
Let  $\kappa=\kappa(n)$ be a sufficiently large constant, and define the following condition:
\begin{condition}\label{simplecondition}
For some $0\le h\le n$, we have that $m_1,\ldots,m_h$ are negative, $m_{h+1},\ldots,m_n$ are positive, and for $1\le i\le h<j\le n$, we have $\kappa|m_i|<m_j$.
\end{condition}
\begin{theorem}\label{thm:nocollisionsandslowgrowth}
If $m_1,\ldots,m_n$ satisfy Condition~\ref{simplecondition} then we get no collisions and growth of the ODE solution as $O(T^{1/2-\epsilon(n)})$.
\end{theorem}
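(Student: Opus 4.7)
The plan is to prove both statements (no collisions, and growth $O(T^{1/2-\epsilon(n)})$) by an induction/bootstrap argument that leverages Lemma~\ref{lem:Ederiv} as the key tool beyond what Theorem~\ref{thm:movementbound} already provides. Condition~\ref{simplecondition} (with $\kappa$ large) is used in two ways I would exploit throughout: first, it forces the No Translation and No Spiral Conditions to hold, since for any partition the positive masses dominate and prevent cancellations; second, for any subset $S \subseteq [n]$ the ``internal mass pairing'' $\sum_{i<j \in S} m_i m_j$ is bounded away from zero in sign (positive if $S$ contains at least two positive vortices or only negative vortices; negative if $S$ contains exactly one positive vortex, since in that case the mixed coefficients of order $m_+|m_-|$ dominate the same-sign negative coefficients of order $m_-^2$).

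For the no-collision part I would argue by contradiction: suppose a collision occurs at time $T^*$ in some $S \subseteq [n]$. Near $T^*$ the set $S$ is an $a(t)$-cluster with $a(t) \to 0$, $b$-isolated with $b$ bounded below. Lemma~\ref{lem:Ederiv} then gives $\left|\frac{d}{dt}\mathcal E_S\right| \le C a(t)^2/b^4$, which is integrable on $[t,T^*]$, so $\mathcal E_S$ has a finite limit as $t \to T^*$. On the other hand, writing $\mathcal E_S = \sum_{i<j \in S} m_im_j \log|x_i - x_j| + O(1)$, I would show this log sum must diverge. For a ``uniform'' collision this follows from the sign of $\sum_{i<j\in S} m_im_j$ computed above. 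For a ``non-uniform'' collision, writing $|x_i - x_j| \sim (T^*-t)^{\beta_{ij}}$, non-divergence of $\mathcal E_S$ requires $\sum_{i<j \in S} \beta_{ij} m_im_j = 0$; one then examines maximal subclusters with matching leading exponents and reduces to the uniform case applied recursively, each reduction ruled out by Condition~\ref{simplecondition}.

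For the growth bound I would bootstrap off Theorem~\ref{thm:movementbound}. Given a target time $T$, I partition $[n]$ hierarchically into maximally isolated clusters at scales $r_0 \ll r_1 \ll \cdots \ll r_L$ with $r_j = T^{\alpha_j}$ for an increasing sequence of exponents. Each cluster $S$ at scale $r_j$ that is $r_{j+1}$-isolated satisfies $\left|\frac{d}{dt}\mathcal E_S\right| \le C r_j^2/r_{j+1}^4$, and integrating over $[0,T]$ gives total variation $O(T r_j^2/r_{j+1}^4)$. Since $\mathcal E_S$ remains bounded in terms of the scales, one obtains the constraint $4\alpha_{j+1} \ge 1 + 2\alpha_j$, which when iterated from the intra-cluster scale $\alpha_0$ up to the system scale $\alpha_L$ yields $\alpha_L \le 1/2 - \epsilon$ for some explicit $\epsilon = \epsilon(n) > 0$ depending on the hierarchy depth. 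At each level, the clusters act as effective particles with masses $m_S$ at positions $x_S$; Lemma~\ref{lem:Ederiv}-style bounds (or the bound \eqref{uSbound}) show these effective particles approximately satisfy the point vortex ODE, and Condition~\ref{simplecondition} carries over to the effective masses since sums of positive-dominant masses remain positive-dominant. Conservation of $X$, $I$, and $\tilde I$ then pins down the reduced system to prevent residual drift.

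The main obstacle will be closing the bootstrap with a strictly positive gain $\epsilon(n) > 0$: this requires careful bookkeeping of how the error terms from Lemma~\ref{lem:Ederiv} compound across hierarchy levels and how the inductive hypothesis propagates from sub-cluster to super-cluster without deterioration. A secondary technical challenge is the non-uniform collision case in the no-collision argument, which requires a nested induction on the collision's own cluster tree and a verification that Condition~\ref{simplecondition}'s sign dichotomy for $\sum_{i<j\in S} m_im_j$ rules out every degeneracy $\sum \beta_{ij} m_im_j = 0$ consistent with the dynamics.
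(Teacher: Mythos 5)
Your plan correctly identifies the main tools (Lemma~\ref{lem:Ederiv}, a hierarchical clustering of scales, conservation of energy, and the sign control that Condition~\ref{simplecondition} provides), and your scaling recursion $4\alpha_{j+1}\ge 1+2\alpha_j$ is exactly the relation encoded in the paper's bootstrap assumption that an $f$-isolated cluster has diameter $f^2/\sqrt{T}$. However, there is a genuine gap at the central step of the growth argument: the quantity $\mathcal E_S$ (and more generally the interaction energy of $k$ subclusters) is a \emph{signed} combination $\sum m_{S_i}m_{S_j}\log d(S_i,S_j)$ of logarithms of distances that may sit at wildly different scales, so approximate conservation of this quantity does not by itself prevent one distance from shrinking while another grows, with the two changes cancelling in the energy. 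Your observation that $\sum_{i<j\in S}m_im_j$ has a definite sign controls only the case where all pairwise distances are comparable. The paper resolves this with a genuine induction on the number of subclusters: it introduces the energy-defined ``average'' shifted logarithmic distance $\hat d$, proves (Claim~\ref{claim}, the only place Condition~\ref{simplecondition} is used) that if all pairwise shifted log-distances are within a bounded factor of each other then the dominant weight forces $\hat d$ to be comparable to each $\overline d(S_i,S_j)$, and otherwise re-clusters the subclusters into fewer groups separated by a large logarithmic gap and applies the inductive hypothesis at the smaller $k$. Your proposal has no mechanism playing the role of this dichotomy, and your statement that the chain of constraints ``yields $\alpha_L\le 1/2-\epsilon$'' does not follow as written: a chain of lower bounds on successive scales gives an upper bound on the top scale only once one knows the number of levels is at most $n$ \emph{and} that each level's distance can change only by a bounded factor, which is precisely the content of the missing inductive lemma. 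You also do not address that the cluster hierarchy is time-dependent.

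A secondary gap is in the no-collision argument: you assume the colliding distances behave like powers $(T^*-t)^{\beta_{ij}}$, but nothing forces power-law asymptotics (distances can oscillate or have no limiting rate), so the reduction ``non-divergence requires $\sum\beta_{ij}m_im_j=0$'' is not available. The paper sidesteps this entirely: the same inductive lemma applied to $S=[n]$ with singleton subclusters yields a two-sided bound $d\in[cT^{-\alpha(n)/2},\,CT^{1/2-1/(2\alpha(n))}]$ on all pairwise distances on any interval of length $T$, and the lower bound precludes collisions with no separate argument. Your divergence-of-energy idea is closer in spirit to the paper's proof of Theorem~\ref{thm:onebignegative} (which uses $\tilde I_S$ rather than $\mathcal E_S$), where the one-big-negative structure makes the signed quadratic form definite; under Condition~\ref{simplecondition} alone the analogous definiteness for the logarithmic energy at mixed scales is exactly what fails without the re-clustering induction.
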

We will be clear where in the proof we use Condition~\ref{simplecondition}.

The proof of this theorem is inspired by the proof of Theorem~\ref{thm:movementbound}, except that we use a different notion instead of position (the shifted logarithmic distance $\overline d$ between vortices defined below) and a different notion instead of center of mass (the average distance $\hat d$ defined below). Analogously to the way in which conservation of $X$ together with the No Translation Condition cause spatial separation if any vortex travels too far, we have that  conservation of the energy $\mathcal E$ together with the No Spiral Condition (Condition~\ref{cond:nospiral}) causes separation in scales if the shifted logarithmic distance between some two vortices changes by a large factor.

Analogously to Lemma~\ref{lem:movementbound}, we have the following lemma from which the theorem immediately follows:

\begin{lemma}\label{lem:nocollisionsandslowgrowth}
There exist constants $\alpha(k)$ for $2\le k\le n$ satisfying the following. Suppose we have a finite set of masses $m_1,\ldots,m_n$ satisfying Condition~\ref{simplecondition}. Suppose further we have a cluster $S\subseteq [n]$, and a partition into subclusters $S=S_1\sqcup\cdots\sqcup S_k$ with $k\ge 2$, and that over a time interval $[t_1,t_2]$ with $t_2-t_1\le T$ we have the following bootstrap assumptions:
\begin{enumerate}
\item  \label{overalldistbound} all distances are less than $T^{1/2-\epsilon(n)}$
\item \label{cond:scaleseparation}
\begin{align*}
\text{each $S_i$ is a }b_i\text{-isolated }&\left(\frac{b_i^2}{\sqrt{T}}\right)\text{-cluster}\\
\text{$S$ is an }f\text{-isolated }&\left(\frac{f^2}{\sqrt{T}}\right)\text{-cluster}.
\end{align*}
\end{enumerate}
for some $b_i,f$ independent of time. Then, as long as $T$ is sufficiently large, if at times $s_1,s_2\in [t_1,t_2]$ and for some $1\le i<j\le k$, we have that
\[
d(S_i,S_j)=T^{1/2-p_1}\qquad\text{and}\qquad d(S_i,S_j)=T^{1/2-p_2}
\]
respectively, then
\begin{equation}\label{lemmaconclusion}
\frac{p_1}{p_{2}}<\alpha(k)
\end{equation}
\end{lemma}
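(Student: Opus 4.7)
The plan is to mirror the inductive structure of Lemma~\ref{lem:movementbound}, with energy conservation plus Lemma~\ref{lem:Ederiv} playing the role of conservation of the center of mass $X$, and Condition~\ref{simplecondition} (which for $\kappa$ large implies the No Spiral Condition) playing the role of the No Translation Condition. The induction is on $k$, with $\alpha(k)$ chosen recursively in terms of $\alpha(k')$ for $k' < k$.

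The first step is to isolate the near-conserved quantity. Decomposing $\mathcal{E}_S$ into within-subcluster and between-subcluster contributions, and using the Taylor approximation $\log|x_a - x_b| \approx \log d(S_\ell, S_m)$ for $a\in S_\ell, b\in S_m$ when $S_\ell, S_m$ are well-separated, one obtains (up to small errors controlled as in the proof of Lemma~\ref{lem:Ederiv})
\[
\mathcal{E}_S \approx \sum_\ell \mathcal{E}_{S_\ell} + L(s), \qquad L(s) := \sum_{1 \le \ell < m \le k} \Omega_\ell \Omega_m \log d(S_\ell, S_m),
\]
where $\Omega_\ell = \sum_{j \in S_\ell} m_j$. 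Applying Lemma~\ref{lem:Ederiv} separately to $S$ and to each $S_\ell$ (with the $a=b^2/\sqrt{T}$-type scales given by the bootstrap) shows that each of $\mathcal{E}_S, \mathcal{E}_{S_1}, \ldots, \mathcal{E}_{S_k}$ has derivative of size $O(1/T)$, hence varies by $O(1)$ over $[t_1, t_2]$. So $L$ itself varies by $O(1)$, and writing $d(S_\ell, S_m) = T^{1/2 - p_{\ell m}}$, the weighted sum $\sum_{\ell<m} \Omega_\ell \Omega_m p_{\ell m}$ varies by $O(1/\log T)$ over $[t_1,t_2]$.

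For the base case $k=2$, $L$ is a nonzero scalar multiple of $\log d(S_1,S_2)$ (Condition~\ref{simplecondition} with $\kappa$ large forces $\Omega_1 \Omega_2 \ne 0$), giving $|p_1 - p_2| = O(1/\log T)$. The bootstrap lower bound $p_{\ell m} \ge \epsilon(n)$ then forces $p_1/p_2 \le 1 + o(1) < \alpha(2)$ for any fixed $\alpha(2) > 1$, once $T$ is large. For the inductive step $k \ge 3$, suppose toward a contradiction that $p_{ij}(s_1)/p_{ij}(s_2) \ge \alpha(k)$ for some pair $(i,j)$. Since the $\Omega_\ell \Omega_m$-weighted sum of the $p_{\ell m}$'s is nearly constant but the $(i,j)$ term has moved appreciably, the remaining terms must shift to compensate, producing at one of $s_1, s_2$ a spread in the $p_{\ell m}$-values, that is, a scale separation. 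Choose a threshold that splits the pairwise subcluster distances into ``small'' and ``large'' with a quantitative gap; the ``small'' edges define a coarsening of $\{1,\ldots,k\}$ into super-groups $G_1, \ldots, G_{k'}$ with $k' < k$, and the unions $\bigcup_{\ell \in G_a} S_\ell$ form a coarser partition of $S$ satisfying the isolation hypotheses of the lemma at level $k'$. Applying the inductive hypothesis both to this coarser partition and to the inner partitions inside each $G_a$ yields stability bounds on how fast the $p_{\ell m}$'s within each level can change; combined with near-conservation of $L$, these bounds contradict the assumed large ratio, provided $\alpha(k)$ was chosen sufficiently large relative to the $\alpha(k')$'s.

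The main obstacle is making the coarsening step quantitatively rigorous: one must locate a threshold scale at which the pairwise distances $d(S_\ell, S_m)$ split cleanly, with enough gap that the new super-groups $\bigcup_{\ell\in G_a} S_\ell$ satisfy the $b$-isolated, ($a$-cluster) hypothesis needed to invoke Lemma~\ref{lem:Ederiv} at the next level of recursion, and to propagate these geometric constraints through up to $n$ levels of induction without the condition $a \ll b$ degenerating. This is presumably what dictates the required size of $\kappa(n)$ in Condition~\ref{simplecondition}, since $\kappa$ must enforce a large multiplicative gap between positive and negative masses at every potential coarsening so that each resulting super-group has nonvanishing total mass and the No Spiral relation remains quantitatively bounded away from zero.
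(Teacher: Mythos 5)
Your overall architecture matches the paper's: the same near-conserved interaction energy $E(S_1,\ldots,S_k)=\mathcal E_S-\sum_i\mathcal E_{S_i}=\sum_{\ell<m}m_{S_\ell}m_{S_m}\log d(S_\ell,S_m)+O(1)$, the same application of Lemma~\ref{lem:Ederiv} to each of $S,S_1,\ldots,S_k$ to get $O(1)$ variation of $E$ and hence $O(1/\log T)$ variation of the weighted sum of exponents, the same base case $k=2$, and the same plan of extracting a scale separation, coarsening into super-groups, and invoking the inductive hypothesis at both levels. However, there is a genuine gap at the pivotal step of the induction, and you have misplaced where Condition~\ref{simplecondition} does its work.

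The gap is your assertion that, because the weighted sum $\sum_{\ell<m}\Omega_\ell\Omega_m p_{\ell m}$ is nearly constant while $p_{ij}$ moves by a factor $\alpha(k)$, ``the remaining terms must shift to compensate, producing \ldots\ a spread in the $p_{\ell m}$-values.'' With signed weights this is false. Take $k=3$ with weights $\Omega_1\Omega_2=\Omega_1\Omega_3=1$ and $\Omega_2\Omega_3=-1$ (so the No Spiral sum equals $1\ne 0$) and exponents $p_{12}=p_{13}=1$, $p_{23}=2$: the weighted sum vanishes, so all three exponents can be multiplied by an arbitrarily large common factor while the weighted sum stays exactly constant and no spread develops at all. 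Near-conservation of $L$ alone therefore cannot yield the scale separation; what is needed is a two-sided quantitative comparison between the normalized quantity $\hat d$ (the weighted average of the $\overline d(S_\ell,S_m)$ with weights $m_{S_\ell}m_{S_m}$) and the individual $\overline d(S_i,S_j)$. The upper bound on $\hat d$ needs only that $\sum_{\ell<m}|m_{S_\ell}m_{S_m}|\big/\big|\sum_{\ell<m}m_{S_\ell}m_{S_m}\big|$ is controlled, but the lower bound is where Condition~\ref{simplecondition} is genuinely used: it forces the largest weight in absolute value to dominate every weight of the opposite sign by a factor $\kappa$, so that, absent a scale separation, $\hat d$ cannot be much smaller than the $\overline d$ attached to that dominant pair. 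This is exactly Claim~\ref{claim} in the paper, and it is the only place Condition~\ref{simplecondition} enters --- not, as you suggest, in guaranteeing nonvanishing super-group masses during coarsening. The coarsening step you flag as the main obstacle is by comparison routine: one orders the at most $\binom{k}{2}$ pairwise exponents and pigeonholes a multiplicative gap of size $3\alpha(k-1)^2$ out of the total ratio $(3\alpha(k-1)^2)^{k^2}$. Without the two-sided comparison supplied by Claim~\ref{claim}, your induction does not close.
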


\begin{figure}[h]
\centering
\includegraphics[width=150pt]{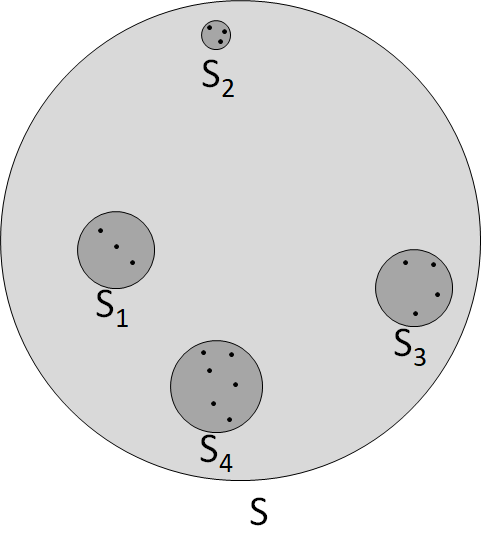}
\caption{This is an illustration of the clusters in the statement of Lemma~\ref{lem:nocollisionsandslowgrowth}}
\label{figg}
\end{figure}

In other words, if different scales stay sufficiently separated, as quantified by bootstrap assumption~\ref{cond:scaleseparation}, then the distance between two clusters cannot change too much, as quantified by \eqref{lemmaconclusion}.
\begin{proof}
We will prove this by induction on $k$, after introducing some definitions.

We will define an appropriately shifted logarithmic notion of distance:
\[
\overline d(S_i,S_j)=\frac{1}{2}-\frac{\log d(S_i,S_j)}{\log T}.
\]
That is, if $\overline d(S_i,S_j)=p$, then $d(S_i,S_j)=T^{1/2-p}$. Note that $\overline d(S_i,S_j)\le 0$ would correspond to a distance between vortices of $\ge \sqrt{T}$, and we will be proving that this is never achieved. Also, $\overline d$ is an inverted distance in the sense that it decreases when actual distance increases. Finally, note that the conclusion of the lemma we are proving is that $\overline d$ changes by at most a factor of $\alpha(k)$.

Now we define the \emph{interaction energy} of $S_1,S_2,\ldots,S_k$ via
\[
E(S_1,\ldots,S_k)=\mathcal E_S-\sum_{i=1}^k \mathcal E_{S_i}.
\]
The reason we use this precise expression is that Lemma~\ref{lem:Ederiv} gives bounds on how fast $E(S_1,\ldots,S_k)$ can change, but the reason we care about $E(S_1,\ldots,S_k)$ is that it is approximately equal to the energy of a point vortex configuration with $k$ point vortices, one at the center of mass of each cluster. More specifically, we can use \eqref{eq:mathcalES} to compute
\begin{align}
E(S_1,\ldots,S_k)&=\sum_{i=1}^k \sum_{\ell \in S_i}\sum_{j\notin S} m_im_j\big(\left(\log|x_\ell-x_j|-\log|x_S-x_j|\right)-\left(\log|x_\ell-x_j|-\log|x_{S_i}-x_j|\right)\big)\nonumber\\
&\qquad+\sum_{i=1}^k  \sum_{\substack{j,\ell\in S_i\\j<\ell}}m_jm_\ell\log|x_j-x_\ell|-m_jm_\ell\log|x_j-x_\ell|\nonumber\\
&\qquad+\sum_{1\le i<j\le k}\sum_{\ell_1\in S_{i}}\sum_{\ell_2\in S_{j}}m_{\ell_1}m_{\ell_2}\Big(\log|x_{\ell_1}-x_{\ell_2}|-(\log|x_{\ell_1}-x_{\ell_2}|-\log|x_{S_i}-x_{\ell_2}|)\nonumber\\
&\qquad\qquad\qquad-(\log|x_{\ell_2}-x_{\ell_1}|-\log|x_{S_j}-x_{\ell_1}|)\Big)\nonumber\\
&=\sum_{i=1}^k \sum_{\ell \in S_i}\sum_{j\notin S} O(1)+\sum_{1\le i<j\le k}\sum_{\ell_1\in S_{i}}\sum_{\ell_2\in S_{j}}m_{\ell_1}m_{\ell_2}(\log  d(S_i,S_j)+O(1))\nonumber\\
&=\sum_{1\le i<j\le k} m_{S_i}m_{S_j}\log d(S_i,S_j)+O(1)\label{eq:ES1Skapprox}.
\end{align}
Note that in the calculation above, we could have error bounds better than $O(1)$, but it suffices for our purposes.

We now define a notion of average distance between clusters as (essentially) the shifted logarithmic distance we would need between every pair of clusters to get the correct energy:
\[
\hat d(S_1,\ldots,S_k)=\frac{1}{2}-\frac{E(S_1,\ldots,S_k)}{(\log T)\sum_{1\le i<j\le k} m_{S_i}m_{S_j}}
\]
where the denominator is nonzero by the No Spiral Condition~(Condition~\ref{cond:nospiral}). Note that by $\eqref{eq:ES1Skapprox}$, we have
\begin{align}\label{eq:hatdisaverage}
\hat d=\frac{\sum_{1\le i<j\le k}m_{S_i}m_{S_j}\overline d(S_i,S_j)}{\sum_{1\le i<j\le k} m_{S_i}m_{S_j}}+O(1/\log T).
\end{align}
Note that while this is some sort of notion of ``average'' distance, there are signed weights involved, so it is possible for it to be larger or smaller than all the pairwise shifted logarithmic distances. 

We now turn to proving the lemma by induction on $k$. For $k=2$, we note that  bootstrap assumption~\ref{cond:scaleseparation} together with Lemma~\ref{lem:Ederiv} gives us that
\[
\left|\frac{d}{dt} E(S_1,S_2)\right|\le \left|\frac{d}{dt} \mathcal E_S\right|+\left|\frac{d}{dt} \mathcal E_{S_1}\right|+\left|\frac{d}{dt} \mathcal E_{S_2}\right|=O\left(\frac{1}{T}\right),
\]
so $E(S_1,S_2)$ changes by $O(1)$ over the relevant time interval. This means that $\hat d(S_1,S_2)$ changes by at most $O(1/\log T)$, so $\overline d(S_1,S_2)$ changes by $O(1/\log T)$. Since bootstrap assumption~\ref{overalldistbound} tells us that $\overline d(S_1,S_2)\ge \epsilon$, and since we can take $T$ sufficiently large, this finishes the proof for the case $k=2$.

Now suppose $k\ge 3$. Assume for the sake of contradiction that there are some $1\le i<j\le k$ so that $\overline d(S_i,S_j)$ varies by a factor of at least $\alpha(k)$ and take a minimal time interval on which such variation occurs. Bootstrap assumption~\ref{cond:scaleseparation} together with Lemma~\ref{lem:Ederiv} gives us that
\[
\left|\frac{d}{dt} E(S_1,\ldots,S_k)\right|\le\left|\frac{d}{dt} \mathcal E_S\right|+\sum_{i=1}^k\left|\frac{d}{dt} \mathcal E_{S_i}\right| =O\left(\frac{1}{T}\right),
\]
so $E(S_1,\ldots,S_k)$ changes by $O(1)$ over the relevant time interval. This means that $\hat d(S_1,\ldots,S_k)$ changes by at most $O(1/\log T)$. Now, by bootstrap assumption~\ref{overalldistbound} we know that $\overline d(S_i,S_j)$ will always be at least $\epsilon(n)$. Since we are assuming $\overline d(S_i,S_j)$  varies by a factor of at least $\alpha(k)$ throughout the time interval, we have there must be some time $s\in [t_1,t_2]$ with
\begin{equation}\label{claimcondition}
\frac{\hat d(S_1,\ldots,S_k)}{\overline d(S_i,S_j)}\ge \frac{\sqrt{\alpha(k)}}{2}\qquad\text{or}\qquad\frac{\hat d(S_1,\ldots,S_k)}{\overline d(S_i,S_j)}\le \frac{2}{\sqrt{\alpha(k)}}
\end{equation}
where we used bootstrap assumption~\ref{overalldistbound} to ensure that either $\hat d$ is small enough that the second part of \eqref{claimcondition} is satisfied, or $\hat d$ is big enough that the $O(1/\log T)$ change is small.

We can take $s$ to be the earliest time that \eqref{claimcondition} holds for some $i,j$. We will assume for now, to be proven later,
\begin{claim}\label{claim}
If we have the bootstrap assumptions of Lemma~\ref{lem:nocollisionsandslowgrowth}, as well as Condition~\ref{simplecondition} and \eqref{claimcondition}, then there exist some $1\le i_1<j_1\le k$ and $1\le i_2<j_2\le k$ with
\begin{equation}\label{eq:distantscales}
\frac{\overline d(S_{i_1},S_{j_1})}{\overline d(S_{i_2},S_{j_2})}\ge (3\alpha(k-1)^2)^{k^2}
\end{equation}
\end{claim}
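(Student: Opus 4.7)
The plan is to argue by contradiction: suppose that all $\binom{k}{2}$ quantities $\overline d(S_a,S_b)$ lie in a common interval $[m,Fm]$ with $F=(3\alpha(k-1)^2)^{k^2}$, so that \eqref{eq:distantscales} fails for every pair. Write $w_{ab}=m_{S_a}m_{S_b}$ and $W=\sum_{a<b}w_{ab}$ (nonzero by the No Spiral Condition, which is implied by Condition~\ref{simplecondition}), and let $W^+=\sum_{w_{ab}>0}w_{ab}$, $W^-=\sum_{w_{ab}<0}|w_{ab}|$, so that $W=W^+-W^-$. By \eqref{eq:hatdisaverage}, $\hat d$ equals the (signed) weighted average $\sum_{a<b}(w_{ab}/W)\,\overline d(S_a,S_b)$ up to $O(1/\log T)$. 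The strategy is to show that each alternative of \eqref{claimcondition} forces $F\ge (3\alpha(k-1)^2)^{k^2}$, provided $\alpha(k)$ and $\kappa=\kappa(n)$ are chosen large enough.

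For the alternative $\hat d/\overline d(S_i,S_j)\ge \sqrt{\alpha(k)}/2$, a crude triangle inequality gives $|\hat d|\le Fm(W^++W^-)/|W|$, and combined with $\overline d(S_i,S_j)\ge m$ this yields $F\ge \sqrt{\alpha(k)}|W|/(2(W^++W^-))$. Since the ratio on the right depends only on the masses and the partition, and there are only finitely many such partitions to consider, choosing $\alpha(k)$ large enough in terms of $\alpha(k-1)$ and the masses produces the desired contradiction.

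For the alternative $\hat d/\overline d(S_i,S_j)\le 2/\sqrt{\alpha(k)}$, in which $\hat d$ is essentially zero, I would extremize the linear functional $N=\sum_{a<b}w_{ab}\overline d(S_a,S_b)=W\hat d$ over the box $\overline d(S_a,S_b)\in [m,Fm]$. Its extrema are attained at the corner where $\overline d(S_a,S_b)$ equals $m$ or $Fm$ according to the sign of $w_{ab}$, giving $N\in [m(W^+-FW^-),\,m(FW^+-W^-)]$. The Case 2 hypothesis $|N|\le 2Fm|W|/\sqrt{\alpha(k)}$ forces this interval to come within $2Fm|W|/\sqrt{\alpha(k)}$ of zero, which after a short manipulation reduces to $F\ge 1/(\rho+2/\sqrt{\alpha(k)})$, where $\rho=\min(W^+,W^-)/\max(W^+,W^-)\in [0,1)$, with $\rho=0$ when one of $W^\pm$ vanishes. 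So the contradiction $F\ge (3\alpha(k-1)^2)^{k^2}$ reduces to requiring $\rho+2/\sqrt{\alpha(k)}\le (3\alpha(k-1)^2)^{-k^2}$.

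The main obstacle is forcing $\rho$ to be small, and this is exactly where Condition~\ref{simplecondition} enters. Under it, each subcluster $S_a$ is either of ``positive type'' (contains at least one of the large positive vortices, so $m_{S_a}>0$ with $|m_{S_a}|\gtrsim m_{h+1}$) or of ``negative type'' (contains only small negative vortices, so $|m_{S_a}|\le nm_{h+1}/\kappa$). A short case analysis on the number of positive- and negative-type subclusters (zero, exactly one, or at least two of each) shows that in every configuration either one of $W^\pm$ vanishes, or $\rho\le C/\kappa$ for a constant $C$ depending only on the masses. Choosing $\kappa=\kappa(n)$ large enough, after all the $\alpha(\cdot)$'s have been fixed inductively, so that $C/\kappa\le (3\alpha(k-1)^2)^{-k^2}/2$, together with $\alpha(k)$ taken to make $2/\sqrt{\alpha(k)}\le (3\alpha(k-1)^2)^{-k^2}/2$, finishes the contradiction. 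Without Condition~\ref{simplecondition} the signed weights could cancel to make $\hat d\approx 0$ even when all $\overline d$'s stay within a bounded ratio, so this condition is precisely what the argument needs to preclude.
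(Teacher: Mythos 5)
Your proposal is correct and follows essentially the same route as the paper's proof: assume for contradiction that all $\overline d(S_a,S_b)$ lie within a factor $F=(3\alpha(k-1)^2)^{k^2}$ of one another, kill the first alternative of \eqref{claimcondition} with the crude upper bound $|\hat d|\le F\,\overline d(S_i,S_j)\sum|m_{S_\mu}m_{S_\nu}|/|\sum m_{S_\mu}m_{S_\nu}|$, and kill the second by using Condition~\ref{simplecondition} to show the weights of one sign dominate, so that $|\hat d|$ is bounded below by a positive multiple of the largest $\overline d$ and hence of $\overline d(S_i,S_j)/F$. The only cosmetic difference is that the paper tracks the single largest weight $m_{S_\mu}m_{S_\nu}$ and the factor-$\kappa$ gap to all weights of the opposite sign, whereas you track $\rho=\min(W^+,W^-)/\max(W^+,W^-)$ via a box extremization; the two bookkeepings are equivalent up to factors of $k^2$, and both correctly identify this sign-domination step as the one and only place Condition~\ref{simplecondition} is used.
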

We are now almost done. We take all pairwise distances of the form $\overline d(S_i,S_j)$ and list them in increasing order. From \eqref{eq:distantscales}, we know that two consecutive numbers in the list must differ by a factor of at least $3\alpha(k-1)^2$. Let $z$ be some number in this gap. Introduce an equivalence relation on $[k]$ via $i\sim j$ if $d(S_i,S_j)<z$. This equivalence relation gives us subclusters $U_1,\ldots,U_\ell$ for $\ell<k$ and with each $U_j$ having as subclusters various $S_j$, with fewer than $k$ subclusters per $U_j$ (see Figure~\ref{figg2} for an illustration).

\begin{figure}[h]
\centering
\includegraphics[width=220pt]{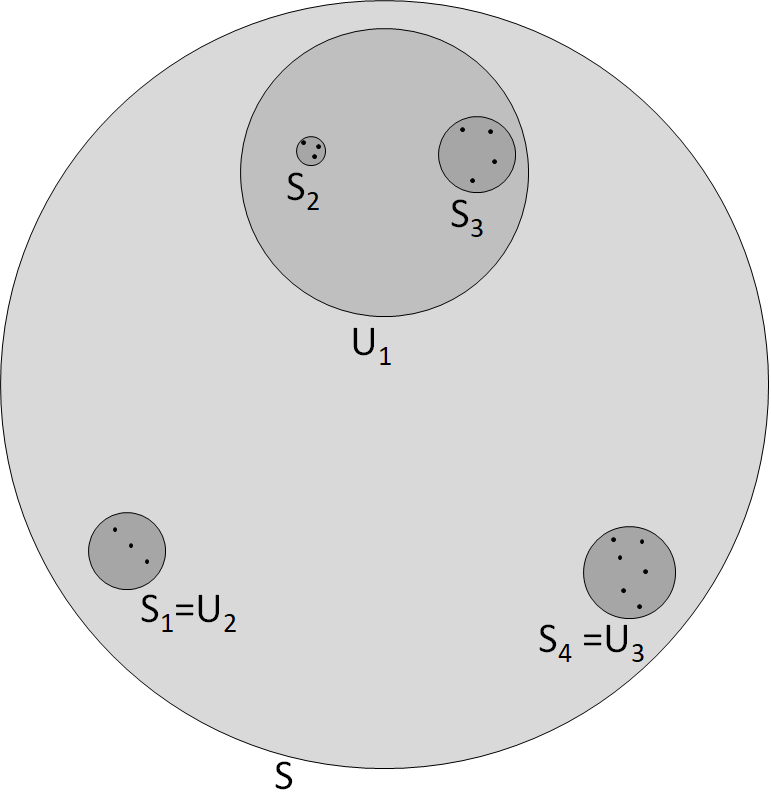}
\caption{This is an illustration of the clusters $U_i$}
\label{figg2}
\end{figure}
We now apply the inductive hypothesis both to each $U_j$ and to $S=U_1\sqcup\cdots\sqcup U_\ell$, going either forward or backward in time, to get a contradiction. The value $3\alpha(k-1)^2$ in the argument above was chosen so that the shifted logarithmic distances can change by a factor of $\alpha(k-1)$ without violating bootstrap assumption~\ref{cond:scaleseparation}.

All that remains is the proof of Claim~\ref{claim}.
\begin{proof}[Proof of Claim~\ref{claim}]
Suppose for the sake of contradiction that the conclusion of the claim is false. Then for all $\mu,\nu$, we have that
\[
\overline d(S_{\mu},S_{\nu})\le (3\alpha(k-1)^2)^{k^2}\overline d(S_i,S_j)
\]
Then \eqref{eq:hatdisaverage} and choosing $\alpha(k)$ to be sufficiently large compared to $\alpha(k-1)$ will give us that
\begin{align}
\hat d(S_1,\ldots,S_k)&\le\frac{\sum_{1\le \mu<\nu\le k}|m_{S_\mu}m_{S_\nu}|}{\left|\sum_{1\le \mu<\nu\le k} m_{S_\mu}m_{S_\nu}\right|}(3\alpha(k-1)^2)^{k^2}\overline d(S_i,S_j)+O(1/\log T)\nonumber\\
&< \frac{\sqrt{\alpha(k)}}{2}\overline d(S_i,S_j).\label{firstclaimresult}
\end{align}
To prove an inequality in the opposite direction, we note that \eqref{eq:hatdisaverage} involves a weighted average with weights $m_{S_\mu}m_{S_\nu}$. Let $\mu,\nu$ be the indices so that $m_{S_\mu}m_{S_\nu}$ is the largest weight in absolute value. Now Condition~\ref{simplecondition} gives us several cases (and this is the only place this condition shows up). The first case is that $m_{S_\mu},m_{S_\nu}$ are both negative. Then all the weights are positive. The second case is that $m_{S_\mu}>0$ and $m_{S_\nu}<0$. Then the largest weight by absolute value is negative, and all positive weights are smaller than it by a factor of at least $\kappa$. The last possibility is that $m_{S_\mu},m_{S_\nu}>0$. Then the largest weight by absolute value is positive, and all negative weights are smaller than it by a factor of at least $\kappa$. We will phrase the argument below assuming that $m_{S_\mu}m_{S_\nu}>0$. This handles the first and third cases, and the second case is handled by the same argument with a few sign reversals. Since we are assuming that the conclusion of Claim~\ref{claim} is false, we have that
\begin{equation}\label{SijSmunu}
\overline d(S_{i},S_{j})\le (3\alpha(k-1)^2)^{k^2}\overline d(S_{\mu},S_{\nu})
\end{equation}
so \eqref{eq:hatdisaverage} will give us that
\begin{align}
\hat d(S_1,\ldots,S_k)&\ge\frac{m_{S_\mu}m_{S_\nu}-k^2(3\alpha(k-1)^2)^{k^2}\frac{m_{S_\mu}m_{S_\nu}}{\kappa}}{k^2 m_{S_\mu}m_{S_\nu}}\overline d(S_\mu,S_\nu)+O(1/\log T)\nonumber\\
&\ge \frac{1}{2k^2}\overline d(S_\mu,S_\nu)
\end{align}
where we are using that $\kappa$ is sufficiently large compares to $\alpha(k-1)$ and that $T$ is sufficiently large. We now use \eqref{SijSmunu} and the fact that $\alpha(k)$ is sufficiently large compared to $\alpha(k-1)$ to obtain
\begin{equation}\label{secondclaimresult}
\hat d(S_1,\ldots,S_k)>\frac{2}{\sqrt{\alpha(k)}} \overline d(S_{i},S_{j}).
\end{equation}
Together, \eqref{firstclaimresult} and \eqref{secondclaimresult} combine to contradict the hypotheses of the claim, thus finishing its proof and the proof of Lemma~\ref{lem:nocollisionsandslowgrowth}.
\end{proof}
\end{proof}
To obtain the proof of Theorem~\ref{thm:nocollisionsandslowgrowth}, we now apply the Lemma~\ref{lem:nocollisionsandslowgrowth} to $S=[n]$ with $S_i=\{i\}$ for each $i$. Since all distances are initially of order 1, this gives that at sufficiently late time $T$, all distances are in the range
\[
\left[cT^{-\frac{\alpha(n)}{2}},CT^{\frac{1}{2}-\frac{1}{2\alpha(n)}}\right].
\]
This precludes the possibility of collisions. Also, now that we have an upper bound on distances between vortices, conservation of center of mass gives the same bound (up to a constant factor) for their positions.
\section{Improvements, corollaries,  and comments}\label{sec:comments}
In this section, we discuss potential improvements to Lemma~\ref{lem:Ederiv}, Lemma~\ref{lem:nocollisionsandslowgrowth}, and Theorem~\ref{thm:nocollisionsandslowgrowth}, as well as some corollaries. We start with improvements to the proofs:
\begin{enumerate}
\item
Condition~\ref{simplecondition} is not the most general condition under which the argument in the proof of Lemma~\ref{lem:nocollisionsandslowgrowth} works. For instance, it will work provided the following condition:
\begin{condition}\label{morecomplexcondition}
If we list the masses in increasing order by absolute value, then whenever $m_i$ and $m_{i+1}$ have opposite signs, we have $\kappa(n)|m_i|<|m_{i+1}|$.
\end{condition}
We can also have pairs of similar size and opposite signs, under some conditions that are annoying to phrase correctly. The important thing is that the No Translation and No Spiral Conditions hold and that the appropriate step in the proof of Claim~\ref{claim} goes through.
\item Given a specific list of masses, if we want to try to push the argument through in some more sensitive cases, we may need to forget about the convenience of doing an induction proof and using Lemma~\ref{lem:nocollisionsandslowgrowth}, and instead do a lot of casework based on what clusters we have and the relative scales, along with optimization of various parameters. We also need to do such an optimization if we want to get better upper bounds on growth of the system. The remaining comments in this list give some ideas we can exploit when we do such an optimization.
\item The proof as written never used the triangle inequality; using it can give improvements in constants, since it constrains possibilities for the distances $\overline d$.
\item The conservation of $\tilde I$ gives information which we can use about the behavior of the top-level cluster if all the masses of its subclusters have the same sign (or if one has a different sign from the rest and a bigger mass than the sum of the rest, see the arguments in Section~\ref{sec:negative}).
\item
Lemma~\ref{lem:Ederiv} is sufficient for the results as stated in this paper, but it's potentially possible to improve this lemma in some time-averaged sense when a cluster consists of only two subclusters. This is because the centers of mass of those two subclusters approximately follow circular orbits over appropriate timescales. This makes the behavior of the system isotropic in an appropriate time-averaged sense, leading to higher-order cancellation for most of the terms of \eqref{ddtoverlineES}. This doesn't deal with the $u_i-u_j$ terms in \eqref{ddtoverlineES} when $|x_i-x_j|$ is small, but those may be possible to deal with by grouping the other vortices into clusters and saying either they are far enough apart (say $\sqrt{ab}$) to get improvement, or we can group them into a cluster. When a cluster consists of three subclusters, it is a completely integrable system with some introduced forcing, and it's going to be isotropic on average unless we are exceptionally unlucky, so maybe some improvements along similar lines can be had there too.
\end{enumerate}
Next, we will give some direct corollaries of  Lemma~\ref{lem:nocollisionsandslowgrowth}. We formulate them in terms of  Condition~\ref{simplecondition}, but all the earlier considerations about what conditions we actually need apply.
\begin{corollary}\label{corr:fintime}
Suppose we have a point vortex system with masses $m_i$ and trajectories $x_i(t)$ evolving over a fixed time interval $[0,T]$, with some lower bound on the distance between them. Suppose that for each of the three vortices, we have a list of masses $m_i^1,\ldots,m_i^{n_i}$ summing to $m_i$ and satisfying Condition~\ref{simplecondition}. Then at time 0, we can replace each of the point vortices with a cluster of point vortices having masses $m_i^1,\ldots,m_i^{n_i}$, and all being within distance $\delta$ of $x_i(0)$. Then for the entire time interval, the vortices of cluster $i$ are within distance $f(\delta)$ of $x_i(t)$ where $f(\delta)\to 0$ as $\delta\to 0$. Here, the function $f$ is allowed to depend on all the masses of all the vortices in each swarm and on the trajectories $x_i(t)$.
\end{corollary}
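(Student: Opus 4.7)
The plan is to bootstrap two quantities over $[0,T]$: the diameter $\mathrm{diam}(S_i(t))$ of each perturbation cluster, and the deviation $|Y_i(t) - x_i(t)|$ of the cluster's center of mass $Y_i = (\sum_j m_i^j y_i^j)/m_i$ from the unperturbed trajectory. Set $R = \tfrac{1}{2}\min_{i \ne j,\, t \in [0,T]} |x_i(t) - x_j(t)| > 0$ and $\alpha = \max_i \alpha(n_i)$ with $\alpha(\cdot)$ from Lemma~\ref{lem:nocollisionsandslowgrowth}. The bootstrap assumptions, to be maintained on $[0, T^*] \subseteq [0, T]$, are (A) $\mathrm{diam}(S_i(t)) \le \delta^{1/(2\alpha)}$ and (B) $|Y_i(t) - x_i(t)| \le R/2$, both satisfied trivially at $t=0$ once $\delta$ is small.

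For the center-of-mass evolution, I Taylor-expand the external velocity acting on each vortex of $S_i$ about $Y_i$ and invoke the dipole cancellation from the proof of Lemma~\ref{lem:Ederiv} (specifically the analogue of \eqref{uSbound}) to obtain $\dot Y_i = \sum_{k\ne i} m_k K(Y_i, Y_k) + O(\mathrm{diam}(S_i)^2/R^3)$. Comparing with the original ODE for $x_i$ and applying Gronwall's inequality on $[0,T]$ yields $|Y_i(t)-x_i(t)| = O(\delta + T\cdot \mathrm{diam}^2/R^3) = O(\delta^{1/\alpha})$ under (A), which strictly improves (B). Here I use (A) and (B) together to guarantee $|y - x_j(t)| \ge R$ whenever $y \in S_i$ and $j \ne i$.

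For the cluster diameter, I apply Lemma~\ref{lem:nocollisionsandslowgrowth} separately to each cluster $S_i$, taking the lemma's outer cluster to be $S_i$, its subclusters to be the individual vortices of $S_i$, and its time parameter equal to the corollary's $T$. The clusterwise hypothesis that $m_i^1,\ldots,m_i^{n_i}$ satisfies Condition~\ref{simplecondition} is exactly what makes this valid: Condition~\ref{simplecondition} is used in the proof of Lemma~\ref{lem:nocollisionsandslowgrowth} only through the signs and magnitudes of the subcluster masses within $S$ (inside Claim~\ref{claim}), and in our setup those subcluster masses are the individual masses of $S_i$. Under (A)--(B), the lemma's own bootstrap is also verified, since $S_i$ is $R$-isolated with diameter $\le \delta^{1/(2\alpha)} \le R^2/\sqrt T$ for $\delta$ small. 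Since $\overline d_0 \ge \tfrac12 + \log(1/\delta)/\log T$ and $\overline d$ can shrink by at most a factor of $\alpha(n_i)$, the lemma yields $|v(t)-w(t)| \le T^{1/2 - \overline d_0/\alpha(n_i)} = T^{(1-1/\alpha(n_i))/2}\,\delta^{1/\alpha(n_i)}$ for any two vortices $v,w \in S_i$, strictly improving (A) for $\delta$ small.

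The main obstacle is that I invoke Lemma~\ref{lem:nocollisionsandslowgrowth} in a regime different from the paper's main application: its parameter $T$ is held fixed while $\overline d_0 \sim \log(1/\delta)/\log T$ is driven to infinity by taking $\delta \to 0$, rather than $T \to \infty$ with $\overline d_0$ of order $1$. One must verify that the $O(1/\log T)$ errors in the lemma's proof remain negligible against $\overline d_0$, which is automatic once $\delta$ is sufficiently small. A secondary technical concern is that Condition~\ref{simplecondition} is only guaranteed clusterwise, not for the full perturbed system, but as noted above this is all that the lemma's proof actually needs. After closing the bootstrap by a standard continuity argument, $|y_i^j(t) - x_i(t)| \le \mathrm{diam}(S_i) + |Y_i(t)-x_i(t)| = O(\delta^{1/\alpha})$, so we may take $f(\delta) = C\delta^{1/\alpha} \to 0$.
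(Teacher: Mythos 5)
Your proof is correct and follows the route the paper intends: the corollary is stated as a direct consequence of Lemma~\ref{lem:nocollisionsandslowgrowth} (applied to each perturbation cluster with singleton subclusters, which is where the clusterwise Condition~\ref{simplecondition} enters) combined with a center-of-mass stability bound of the type \eqref{uSbound} and a bootstrap on the two quantities you track. Your explicit observation that the lemma must be re-run in the fixed-$T$, large-$\overline d_0$ regime --- where the $O(1/\log T)$ errors are absorbed because $\overline d_0 \sim \log(1/\delta)/\log T \to \infty$ as $\delta \to 0$ --- addresses a point the paper glosses over, and your resolution of it is sound.
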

Note that a weaker version of this result, where all point vortices in each cluster have the same sign, is proven in \cite{MarchioroPulvurenti94pointchapter}.

We now mention the result \cite{Zbarsky20threepatches} which allows one to take a self-similarly expanding system of 3 vortices and replace each point vortex with a vortex patch and still get a system where the distance between the patches grows as $\sqrt{t}$. From the point of view of stability of the 3-vortex system, the relevant fact about the evolution of the vortex patches was that the diameter of each vortex patch grows at most as $t^a$ for some $a<1/2$, which we can still obtain if instead of a vortex patch, we have a cluster of point vortices. To prove this corollary, one needs to combine the stability analysis for the 3-vortex system from \cite{Zbarsky20threepatches} with the bootstrap assumption that the diameter of each cluster is less than $\epsilon t^a$, which we get from Lemma~\ref{lem:nocollisionsandslowgrowth}.
\begin{corollary}\label{corr:spiral}
Suppose we have a self-similarly expanding system of three vortices with masses $m_1,m_2,m_3$ whose distance grows like $\sqrt{t}$. Suppose that for each of the three vortices, we have a list of masses $m_i^1,\ldots,m_i^{n_i}$ summing to $m_i$ and satisfying Condition~\ref{simplecondition}. Then for $\delta$ sufficiently small, we can replace each of the three point vortex with a cluster of vortices having diameter at most $\delta$ and masses $m_i^1,\ldots,m_i^{n_i}$ and then at all times $t$, we will have three clusters, each of diameter at most $t^{a}$ for some $a<1/2$ and with the distance between clusters being at least $c\sqrt{t}$.
\end{corollary}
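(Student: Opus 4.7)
The plan is to run a two-scale bootstrap argument in which the internal (intra-cluster) and external (inter-cluster) dynamics reinforce each other. Pick once and for all an exponent $a$ with $\max_i (1/2 - 1/(2\alpha(n_i))) < a < 1/2$, where $\alpha(\cdot)$ comes from Lemma~\ref{lem:nocollisionsandslowgrowth}; this interval is nonempty by construction. On a time interval $[1,T]$ I would carry the bootstrap assumptions (A) each cluster $S_i$ is an $(\epsilon t^a)$-cluster for a small $\epsilon$, and (B) each center of mass $x_{S_i}(t)$ stays within $\sqrt{t}/C$ of the unperturbed self-similar trajectory, so that in particular the inter-cluster distances remain at least $c\sqrt{t}$.

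To improve (A) I would apply Lemma~\ref{lem:nocollisionsandslowgrowth} to each cluster $S_i$ with the partition into singletons. The scale-separation hypothesis of that lemma holds automatically: by (B) the other clusters supply an external scale $b \gtrsim \sqrt{t}$ that dwarfs the internal scale $\lesssim \epsilon t^a$. Condition~\ref{simplecondition} on the masses within $S_i$ is built into the hypothesis of the corollary, and the lemma then yields an internal diameter of order $t^{1/2-1/(2\alpha(n_i))}$, strictly smaller than $\epsilon t^a$ by the choice of $a$ --- an honest improvement of (A).

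To improve (B) I would invoke the stability analysis for the self-similarly expanding three-vortex system from \cite{Zbarsky20threepatches}. What that argument demands of each perturbation replacing an original point vortex is only that, viewed macroscopically, it has diameter growing as $t^a$ for some $a < 1/2$, since this controls the multipole error when Taylor-expanding the kernel $K$ around the cluster centers of mass (cf.\ the calculations in the proof of Lemma~\ref{lem:Ederiv}). This is exactly what the improved (A) provides. The macro-scale ODE for the cluster centers of mass agrees with the original three-vortex ODE up to such a perturbation, and the stability argument then pins the centers of mass to the self-similar trajectories up to errors of order $o(\sqrt{t})$, closing (B). Note that one cannot instead apply Lemma~\ref{lem:nocollisionsandslowgrowth} at the macroscopic scale, since a self-similar three-vortex system has $\sum_{i<j} m_i m_j = 0$ and so violates the No Spiral Condition.

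The main obstacle is the coupling between the two scales: each bootstrap relies on the other, and both improvements must be strict. The window $\max_i(1/2-1/(2\alpha(n_i))) < a < 1/2$ exists precisely to make both improvements strict with room to spare; choosing $\delta$ sufficiently small provides the base case at $t=1$, and a standard continuity-in-$T$ argument then propagates the bootstrap for all times. A subtle point I would need to verify separately is that the proof of Lemma~\ref{lem:nocollisionsandslowgrowth}, though stated under a global version of Condition~\ref{simplecondition}, actually uses the condition only on the masses participating in the cluster $S$ being analyzed (in the case analysis of Claim~\ref{claim}), so the per-cluster hypothesis of the corollary suffices to invoke it on each $S_i$.
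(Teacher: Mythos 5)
Your proposal is correct and takes essentially the same route as the paper, which likewise proves the corollary by coupling the stability analysis of the three-vortex system from \cite{Zbarsky20threepatches} with a bootstrap assumption that each cluster has diameter at most $\epsilon t^a$, supplied by Lemma~\ref{lem:nocollisionsandslowgrowth}. Your write-up is in fact more detailed than the paper's sketch, and your two ``subtle points'' (that the No Spiral Condition fails at the macroscopic scale, and that Condition~\ref{simplecondition} is only needed per cluster) are both correct observations consistent with how the paper intends the argument to run.
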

Note that this includes the case where all vortices in the cluster have the same sign (since Condition~\ref{simplecondition} is then satisfied). Note also that if one obtained stability for some self-similarly expanding system of more than three point vortices, one would then obtain a result corresponding to Corollary~\ref{corr:spiral} for that system.

Finally, we note a limitation of the method of proof used for Lemma~\ref{lem:nocollisionsandslowgrowth}. One might hope that this proof technique could allow us to prove Conjecture~\ref{conj:weak}. This runs into the following difficulty:

Imagine we have $n=10$ and $S=\{6,7,8,9,10\}$. As $t$ approaches $T$, we set all distances inside the cluster $S$ to be $\sim (T-t)^{3/2}$ and all other distances to be $\sim T-t$. By appropriately choosing the masses, we can get that the total energy is conserved.  Then Lemma~\ref{lem:Ederiv} tells us that the internal energy of the cluster can change at rate at most
\[
\frac{C\left((T-t)^{3/2}\right)^2}{(T-t)^{4}}=\frac{C}{T-t},
\]
but this is compatible with the behavior described above and blowup at time $T$. To forbid such examples, we need some variant of Condition~\ref{simplecondition} (or new ideas).

\bibliographystyle{abbrv}
\bibliography{zbarskybib}
\end{document}